\newcommand{\C}{{\mathbb{C}}}
\newcommand{\F}{{\mathbb{F}}}
\newcommand{\G}{\mathbb{G}}
\newcommand{\Q}{{\mathbb{Q}}}
\newcommand{\oQ}{\overline{\Q}}
\newcommand{\R}{{\mathbb{R}}}
\newcommand{\Z}{{\mathbb{Z}}}
\newcommand{\hM}{\hat{M}}
\newcommand{\hX}{\hat{X}}
\newcommand{\oGamma}{\overline{\Gamma}}
\newcommand{\hGamma}{\hat{\Gamma}}
\newcommand{\hPhi}{\hat{\Phi}}
\newcommand{\abb}{\mathrm{ab}}
\newcommand{\ddet}{\mathrm{det}}
\newcommand{\of}{\overline{f}}
\newcommand{\og}{\overline{g}}
\renewcommand{\mod}{\;\mathrm{mod}\;}
\newcommand{\per}{\mathrm{per}}
\newcommand{\spec}{\mathrm{spec}\,}
\newcommand{\syn}{\mathrm{syn}}
\newcommand{\End}{\mathrm{End}\,}
\newcommand{\Fix}{\mathrm{Fix}}
\newcommand{\GL}{\mathrm{GL}\,}
\newcommand{\Ker}{\mathrm{Ker}\,}
\newcommand{\rig}{\mathrm{rig}}
\newcommand{\tr}{\mathrm{tr}}
\newcommand{\Ah}{{\mathcal A}}
\newcommand{\Dh}{{\mathcal D}}
\newcommand{\Kh}{\mathcal{K}}
\newcommand{\Mh}{\mathcal{M}}
\newcommand{\Nh}{\mathcal{N}}
\newcommand{\Oh}{{\mathcal O}}
\newcommand{\ep}{\mathfrak{p}}
\newcommand{\ozeta}{\overline{\zeta}}
\newcommand{\hf}{\hat{f}}
\newcommand{\silo}{\xrightarrow{\sim}}
\newcommand{\verk}{\mbox{\scriptsize $\,\circ\,$}}
\newtheorem{theorem}{Theorem}
\newtheorem{prop}[theorem]{Proposition}
\newtheorem{cor}[theorem]{Corollary}
\newtheorem{remark}[theorem]{Remark}
\newenvironment{proof}{\noindent {\bf Proof}}{\mbox{}\hfill$\Box$}
\begin{document}
\title{\Large Regulators, entropy and infinite determinants}
\author{\large Christopher Deninger}
\date{\ }
\maketitle
\section{Introduction}
In this note we describe instances where values of the $K$-theoretical regulator map evaluated on topological cycles equal entropies of topological actions by a group $\Gamma$. These entropies can also be described by determinants on the von~Neumann algebra of $\Gamma$. There is no conceptual understanding why regulators should be related to entropies or to such determinants and it would be interesting to find an explanation. The relations were first observed for real regulators. The latter have $p$-adic analogues and both $p$-adic entropy and $p$-adic determinants were then defined so that similar relations hold as in the real case. We describe this $p$-adic theory in the second part of the paper. In fact there are two natural $p$-adic analogues for the relevant values of the real regulator. A purely local one which we use and a local-global one for which as yet no corresponding notions of $p$-adic entropy or $p$-adic infinite determinants exist.

In both the real and $p$-adic cases our regulator maps are defined on subvarieties of $\spec \Q [\Gamma]$ where $\Gamma = \Z^d$. Entropy and determinants on the other hand are defined for nonabelian groups $\Gamma$ as well. We explain this in some detail because it would be interesting to find a corresponding nonabelian generalization of the classical and $p$-adic regulator in some version of non-commutative algebraic geometry. For the Heisenberg group for example, the expected regulator values are quite explicitely known by dynamical considerations.

This note is almost entirely a survey of known results with the exception of some results in section \ref{subsec:31}. However the different aspects of the theory have not been discussed together before. Along the way we point out several open questions and possible directions for further research. 

\section{The archimedian case} \label{sec:2}
\subsection{Expansiveness and entropy} \label{subsec:21}
Consider a countable discrete group $\Gamma$. A left action of $\Gamma$ by homeomorphisms on a compact metrizable topological space $X$ is called expansive if the following holds: There is a metric $d$ defining the topology of $X$ and some $\varepsilon > 0$ such that for every pair of distinct points $x \neq y$ in $X$ there exists an element $\gamma \in \Gamma$ with
\begin{equation} \label{eq:1}
 d (\gamma x , \gamma y) \ge \varepsilon \; .
\end{equation}
If condition \eqref{eq:1} holds for one metric $d$ it holds for every other metric $d'$ as well, possibly with a different constant $\varepsilon$. We will meet many examples of expansive actions in section \ref{subsec:23}. 

Next we need the notion of entropy. Assume that in addition $\Gamma$ is amenable. This is equivalent to the existence of a F{\o}lner sequence $F_1 , F_2 , \ldots $ of finite subsets of $\Gamma$ such that for every $\gamma \in \Gamma$ we have
\[
 \lim_{n\to \infty} \frac{|F_n \gamma \bigtriangleup F_n|}{|F_n|} = 0 \; .
\]
For finite $F \subset \Gamma$, a subset $E$ of $X$ is called $(F, \varepsilon)$-separated if for all $x \neq y$ in $E$ there exists $\gamma \in \Gamma$ with $d (\gamma  x , \gamma y) \ge \varepsilon$. Let $s_F (\varepsilon)$ be the maximum of the cardinalities of all $(F, \varepsilon)$-separated subsets. It is finite because $X$ is compact. The (metric) entropy $0 \le h \le \infty$ of the $\Gamma$-action on $X$ is defined by the formula:
\[
 h = h (X) := \lim_{\varepsilon \to 0} \varlimsup_{n\to \infty} \frac{1}{|F_n|} \log s_{F_n} (\varepsilon) \; .
\]
It depends neither on the choice of a metric defining the topology of $X$ nor on the choice of the F{\o}lner sequence. Note that the $\varepsilon$-limit exists by monotonicity. It becomes stationary if the $\Gamma$-action is expansive. See \cite{} section 2 for some of the proofs and further references. The sequence $F_n = \{ 0 , 1 , 2 , \ldots , n \}^d$ is a F{\o}lner sequence in $\Gamma = \Z^d$ and hence the entropy of $\Z^d$-actions is defined. Note that if both $\Gamma$ and $X$ are finite, $F_i = \Gamma$ for $i = 1 , 2 , \ldots$ is a F{\o}lner sequence and we have
\begin{equation} \label{eq:2}
 h = \frac{1}{|\Gamma|} \log |X| \; .
\end{equation}
Although trivial from the point of view of dynamical systems it is useful to keep this example in mind. For expansive actions the entropy can sometimes be determined by counting periodic points. Recall that a group $\Gamma$ is residually finite if it has a sequence $\Gamma_1 , \Gamma_2 , \ldots $ of normal subgroups of finite index with trivial intersection. Let us write $\Gamma_n \to e$ in case the sequence $(\Gamma_n)$ satisfies the stronger condition that only the neutral elements $e$ lies in $\Gamma_n$ for infinitely many $n$'s. Such sequences exist as well. Let $\Fix_{\Gamma_n} (X)$ be the set of points in $X$ which are fixed by $\Gamma_n$. If the action of $\Gamma$ on $X$ is expansive, $\Fix_{\Gamma_n} (X)$ is finite. Set
\begin{equation} \label{eq:3}
 h^{\per} = \lim_{n\to\infty} \frac{1}{(\Gamma : \Gamma_n)} \log |\Fix_{\Gamma_n} (X)|
\end{equation}
if the limit exists for any choice of sequence $\Gamma_n \to e$. It is then independent of the choice of $\Gamma_n$'s. Note that $\Gamma = \Z^d$ is residually finite and that $\Gamma_n = (n \Z)^d$ provides a sequence with $\Gamma_n \to e = 0$. As mentioned above, it sometimes happens for expansive actions of countable discrete groups which are both residually finite and amenable that we have $h = h^{\per}$. As a trivial example note that because of \eqref{eq:2} this is true if both $\Gamma$ and $X$ are finite. In this case the condition $\Gamma_n \to e$ means that $\Gamma_n = \{ e \}$ if $n$ is large enough.  More interesting examples are given in section \ref{subsec:23}.

\subsection{Determinants on von~Neumann algebras} \label{subsec:22}
Let $\Mh$ be a von~Neumann algebra with a faithful finite normal trace $\tau$, see \cite{Di}. For an operator $\Phi$ in $\Mh$ let $E_{\lambda} = E_{\lambda} (|\Phi|)$ be the spectral resolution of the selfadjoint operator $|\Phi| = (\Phi^* \Phi)^{1/2}$. Then both $|\Phi|$ and $E_{\lambda}$ lie in $\Mh$ and one defines the Fuglede--Kadison determinant $\det_{\Mh} \Phi \ge 0$ by the integral
\[
 \log \ddet_{\Mh} \Phi = \int^{\infty}_0 \log \lambda \, d\tau (E_{\lambda}) \quad \text{in} \; \R \cup \{ -\infty \} \; . 
\]
For an invertible operator $\Phi \in \Mh^{\times}$ we have
\[
 \log \ddet_{\Mh} \Phi = \tau (\log |\Phi|) \; .
\]
The general case can be reduced to this one by the formula
\[
 \ddet_{\Mh} \Phi = \lim_{\varepsilon \to 0+} \ddet_{\Mh} (|\Phi| + \varepsilon) \; .
\]
It is a non-obvious fact that this determinant is multiplicative
\[
 \ddet_{\Mh} \Phi_1 \Phi_2 = \ddet_{\Mh} \Phi_1 \; \ddet_{\Mh} \Phi_2 \quad \text{for} \; \Phi_1 , \Phi_2 \; \text{in} \; \Mh \; .
\]
Morally this is due to the Campbell--Hausdorff formula although the proof in \cite{Di} I.6.11 proceeds differently using a uniqueness result for solutions of ordinary differential equations.

The determinant on $\Mh^{\times}$ factors over $K_1 (\Mh) = \GL_{\infty} (\Mh)^{\abb}$. Namely, for $n \ge 1$ consider the von~Neumann algebra $M_n (\Mh)$ with  trace $\tau_n = \tau \verk \tr_n$ where $\tr_n : M_n (\Mh) \to \Mh$ is the usual trace of matrices. The determinants on $\GL_n (\Mh) = M_n (\Mh)^{\times}$ are compatible for varying $n$ and define a homomorphism on $\GL_{\infty} (\Mh)$ which factors over $K_1 (\Mh)$
\begin{equation} \label{eq:3a}
 \ddet_{\Mh} : K_1 (\Mh) \longrightarrow \R^*_+\; .
\end{equation}

For the abelian von~Neumann algebra $\Mh = L^{\infty} (\Omega)$ attached to a finite measure space $(\Omega , \Ah , \mu)$ and equipped with the trace $\tau (\Phi) = \int_{\Omega} \Phi  \, d\mu $ we have
\begin{equation} \label{eq:4}
 \ddet_{\Mh} \Phi = \exp \int_{\Omega} \log |\Phi| \, d \mu  \quad \text{for any} \; \Phi \in \Mh \; .
\end{equation}
This follows immediately from the definitions. Our von~Neumann algebras will arise from groups. For a discrete group let $L^2 (\Gamma)$ be the Hilbert space of square summable complex valued functions $x : \Gamma \to \C$. The group $\Gamma$ acts isometrically from the right on $L^2 (\Gamma)$ by the formula $(x\gamma) (\gamma') = x (\gamma' \gamma^{-1})$. Representing elements of $L^2 (\Gamma)$ as formal sums $\sum x_{\gamma'} \gamma'$, this corresponds to right multiplication by $\gamma$. The von~Neumann algebra of $\Gamma$ is the algebra of $\Gamma$-equivariant bounded linear operators from $L^2 (\Gamma)$ to itself. It is equipped with the faithful finite normal trace $\tau_{\Gamma} : \Nh\Gamma \to \C$ defined by $\tau_{\Gamma} (\Phi) = (\Phi (e) , e)$ where $(,)$ is the scalar product on $L^2 (\Gamma)$ and $e \in \Gamma \subset L^2 (\Gamma)$ is the unit of $\Gamma$. For $\gamma \in \Gamma$ consider the unitary operator $\ell_{\gamma}$ of left multiplication on $L^2 (\Gamma)$. It commutes with the right $\Gamma$-multiplication and hence defines an element of $\Nh\Gamma$. The $\C$-algebra homomorphism
\[
 \ell : \C \Gamma \longrightarrow \Nh \Gamma \; , \; {\textstyle\sum} a_{\gamma} \gamma \longmapsto {\textstyle\sum} a_{\gamma} \ell_{\gamma}
\]
is injective as one sees by evaluating $\ell (f)$ on $e$. It will always be viewed as an inclusion. It is not difficult to see that $\ell$ extends to the $L^1$-group algebra $L^1 (\Gamma)$ of $\Gamma$, so that we have inclusions of $\C$-algebras
\[
 \C \Gamma \subset L^1 (\Gamma) \subset \Nh \Gamma \; .
\]
In particular, an element $f$ of $\C \Gamma$ which is a unit in $L^1 (\Gamma)$ is also a unit in $\Nh\Gamma$. We remark this because in the $p$-adic case we do not have a good replacement for $\Nh\Gamma$ with its many idempotents but only one for $L^1 (\Gamma)$. Writing an element $f \in L^1 (\Gamma)$ as a formal series $f = \sum a_{\gamma} \gamma$ with $\sum |a_{\gamma}| < \infty$ note that $\tau_{\Gamma} (f) = a_e$. In the trivial case of a finite group we have $\Nh\Gamma \subset \End \C \Gamma$ and
\[
 \ddet_{\Nh\Gamma} f = |\ddet \, \ell (f)|^{1 / |\Gamma|} \; .
\]
For a discrete abelian group $\Gamma$ the von~Neumann algebra is easy to describe using the compact Pontrjagin dual group $\hGamma$ with its Haar probability measure $\mu$. Namely, the Fourier transform gives an isometry $\wedge : L^2 (\Gamma) \silo L^2 (\hGamma)$. Conjugation by $\wedge$ gives an isomorphism of $\Nh\Gamma$ with the algebra $L^{\infty} (\hGamma)$ of multiplication operators on $L^2 (\hGamma)$. Denoting this isomorphism $\Nh\Gamma \to L^{\infty} (\hGamma)$ by $\Phi \mapsto \hPhi$, we have $\hPhi = \widehat{\Phi (e)}$. On the other hand, the Fourier transform restricts to a map 
\[
\wedge : L^1 (\Gamma) \to L^{\infty} (\hGamma) \subset L^2 (\hGamma) \; .
\]
Setting $\tau (\varphi) = \int_{\hGamma} \varphi \, d\mu$, we get a commutative diagram 
\[
 \xymatrix{
L^1 (\Gamma) \ar@{^{(}->}[r]^{\ell} \ar@{^{(}->}[d]^{\wedge} & \Nh \Gamma \ar[r]^{\tau_{\Gamma}} \ar[d]^{\wr \wedge} & \C \ar@{=}[d] \\
L^{\infty} (\hGamma) \ar@{=}[r] & L^{\infty} (\hGamma) \ar[r]^{\tau} & \C \; .
}
\]
Using \eqref{eq:4}, this implies the formula:
\begin{equation} \label{eq:5}
 \ddet_{\Nh\Gamma} f = \exp \int_{\hGamma} \log |\hf| \, d\mu \quad \text{for} \; f \in L^1 (\Gamma) \; .
\end{equation}
Sometimes the Fuglede--Kadison determinant for a group $\Gamma$ may be calculated as a renormalized limit of finite determinants. Here is a simple case where this is possible, \cite{DS} Theorem 5.7. For a residually finite group $\Gamma$ and a sequence $\Gamma_n \to e$ as above write $\Gamma^{(n)} = \Gamma / \Gamma_n$ and let
\[
 L^1 (\Gamma) \longrightarrow L^1 (\Gamma^{(n)}) = \C \Gamma^{(n)} \; , \; f \longmapsto f^{(n)}
\]
be the algebra homomorphism of ``integration along the fibres'' : For $f = \sum a_{\gamma} \gamma$ in $L^1 (\Gamma)$ it is defined by the formula:
\[
 f^{(n)} = \sum_{\delta \in \Gamma^{(n)}} \Big( \sum_{\gamma \in \delta} a_{\gamma} \Big) \delta \; .
\]

\begin{theorem} \label{t1}
 If $f$ is a unit in $L^1 (\Gamma)$, we have:
\[
 \ddet_{\Gamma} f = \lim_{n\to\infty} \ddet_{\Gamma^{(n)}} f^{(n)}  = \lim_{n\to\infty} |\ddet \, \ell (f^{(n)})|^{1 / |\Gamma ^{(n)}|} \; .
\]
\end{theorem}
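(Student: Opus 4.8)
The plan is to reduce both equalities to a single trace-convergence statement followed by a spectral-measure argument. The second equality is immediate from the finite-group formula $\ddet_{\Gamma^{(n)}} g = |\ddet\, \ell(g)|^{1/|\Gamma^{(n)}|}$ recorded above, applied to $g = f^{(n)}$, so the genuine content lies in the first equality. Since $f$ is a unit in $L^1 (\Gamma)$ it is a unit in $\Nh\Gamma$, and the ring homomorphism $f \mapsto f^{(n)}$ carries the inverse $g := f^{-1} \in L^1 (\Gamma)$ to an inverse of $f^{(n)}$, so each $f^{(n)}$ is also a unit in $\C\Gamma^{(n)}$, hence in $\Nh\Gamma^{(n)}$. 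Both sides may therefore be expressed through the invertible-case formula $\log \ddet = \tau (\log |\hullet|)$, and the goal becomes
\[
 \tfrac12\, \tau_{\Gamma^{(n)}} \big( \log |f^{(n)}|^2 \big) \longrightarrow \tfrac12\, \tau_{\Gamma} \big( \log |f|^2 \big) ,
\]
where $|f|^2 = f^* f$ and $|f^{(n)}|^2 = (f^{(n)})^* f^{(n)}$.

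First I would establish the elementary fact that $\tau_{\Gamma^{(n)}} (h^{(n)}) \to \tau_{\Gamma} (h)$ for every $h = \sum a_{\gamma} \gamma \in L^1 (\Gamma)$. Indeed $\tau_{\Gamma} (h) = a_e$, while $\tau_{\Gamma^{(n)}} (h^{(n)})$ is the identity coefficient of $h^{(n)}$, namely $\sum_{\gamma \in \Gamma_n} a_{\gamma}$; since $\Gamma_n \to e$ every fixed $\gamma \neq e$ eventually leaves $\Gamma_n$, and an $\varepsilon$-tail estimate using $\sum |a_{\gamma}| < \infty$ gives the convergence. As $h \mapsto h^{(n)}$ is a homomorphism of $*$-algebras (one checks directly that $(h^{(n)})^* = (h^*)^{(n)}$), applying this with $h = (f^* f)^k$ yields convergence of all moments
\[
 \int \lambda^k \, d\mu_n (\lambda) = \tau_{\Gamma^{(n)}} \big( (f^{(n)})^* f^{(n)} \big)^k \longrightarrow \tau_{\Gamma} \big( (f^* f)^k \big) = \int \lambda^k \, d\mu (\lambda) ,
\]
where $\mu_n$ and $\mu$ are the spectral measures of $|f^{(n)}|^2$ and $|f|^2$ with respect to $\tau_{\Gamma^{(n)}}$ and $\tau_{\Gamma}$; these are probability measures because the traces are normalized.

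The decisive step is to control these measures near $\lambda = 0$. Writing $\| \hullet \|$ for the operator norm on the relevant $L^2$-space and $\| \hullet \|_1$ for the $L^1$-norm, one has $\| h \| \le \| h \|_1$ throughout, and integration along the fibres is $L^1$-contractive, so $\| f^{(n)} \| \le \| f^{(n)} \|_1 \le \| f \|_1 =: \sqrt{M}$; thus every $\mu_n$ (and $\mu$) is supported in $[0, M]$. Here the unit hypothesis enters crucially: since $g^{(n)} = (f^{(n)})^{-1}$ we get $\| (f^{(n)})^{-1} \| \le \| g^{(n)} \|_1 \le \| g \|_1 =: C$, whence the smallest singular value of $f^{(n)}$ is at least $1/C$ and $\mu_n$ is supported in $[1/C^2 , M]$, uniformly in $n$; the same bound holds for $\mu$. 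On the fixed compact interval $[0, M]$ convergence of all moments forces weak convergence $\mu_n \to \mu$, since polynomials are dense in $C [0,M]$. As all the measures live in $[1/C^2 , M]$, the integrand $\log \lambda$ may be replaced by a function in $C [0,M]$ without altering any integral, and weak convergence then gives $\int \log \lambda \, d\mu_n \to \int \log \lambda \, d\mu$, which is precisely $\ddet_{\Gamma^{(n)}} f^{(n)} \to \ddet_{\Gamma} f$.

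The hard part is the uniform lower spectral bound in the third paragraph. Without it the singularity of $\log$ at the origin could allow mass of $\mu_n$ escaping towards $0$ to spoil the limit, and this near-zero control is the general difficulty in all Fuglede--Kadison determinant approximation results. What rescues the present situation is that invertibility is assumed in $L^1 (\Gamma)$ rather than merely in $\Nh\Gamma$: the inverse pushes forward under integration along the fibres to a uniform operator-norm bound on $(f^{(n)})^{-1}$, pinning the spectra of the $|f^{(n)}|^2$ uniformly away from the origin and making the passage from weak convergence of the $\mu_n$ to convergence of the logarithmic integrals legitimate.
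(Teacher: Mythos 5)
The paper itself gives no proof of Theorem \ref{t1}: it is quoted as \cite{DS} Theorem 5.7, so your argument can only be judged on its own merits against the ingredients the survey makes available --- and it holds up. The second equality is indeed just the finite-group formula $\ddet_{\Nh\Gamma^{(n)}} g = |\ddet\,\ell(g)|^{1/|\Gamma^{(n)}|}$. For the first equality, every step checks out: integration along the fibres is a unital $*$-algebra homomorphism, so $g=f^{-1}\in L^1(\Gamma)$ maps to $(f^{(n)})^{-1}$ and $((f^*f)^k)^{(n)}=((f^{(n)})^*f^{(n)})^k$; the trace lemma $\tau_{\Gamma^{(n)}}(h^{(n)})=\sum_{\gamma\in\Gamma_n}a_\gamma\to a_e=\tau_\Gamma(h)$ is exactly where the condition $\Gamma_n\to e$ enters, via the tail estimate in $\ell^1$; the bounds $\|f^{(n)}\|\le\|f^{(n)}\|_1\le\|f\|_1$ and $\|(f^{(n)})^{-1}\|\le\|g\|_1$ confine all spectral measures to a common interval $[1/C^2,M]$ with $C=\|g\|_1$, $M=\|f\|_1^2$; and on a fixed compact interval, convergence of moments of probability measures does imply weak convergence, after which the logarithm can be replaced by a continuous function on $[0,M]$ without changing any of the integrals. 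You have also correctly identified the crux: invertibility in $L^1(\Gamma)$, rather than merely in $\Nh\Gamma$, is what pushes forward to a uniform spectral gap for the finite quotients and tames the singularity of $\log$ at $0$; this uniform control plus trace approximation is the same mechanism as in the published proof in \cite{DS}, which however organizes the limit through uniform (power-series/polynomial) approximation of the logarithm on the common spectral interval rather than through weak convergence of spectral measures --- the two packagings are interchangeable, and yours is arguably the cleaner measure-theoretic formulation.
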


Consider the example $\Gamma = \Z^d$. In this case $\hGamma = T^d$ is the real $d$-torus in $(\C^*)^d$. Writing the coordinates of $\C^d$ as $z_1 , \ldots , z_d$, Fourier transform gives an isomorphism $\wedge : \C [\Z^d] \silo \C [z^{\pm 1}_1 , \ldots , z^{\pm 1}_d] \subset L^{\infty} (T^d)$ of the group ring with the ring of Laurent polynomials in $z_1 , \ldots , z_d$. The logarithm of the FK-determinant of $f \in \C [\Z^d]$ equals the (logarithmic) Mahler measure of $\hf$
\begin{equation} \label{eq:6}
 \log \ddet_{\Nh\Z^d} (f) = m (\hf) := \int_{T^d} \log |\hf| \, d\mu \; .
\end{equation}
We have the following theorem of Wiener, \cite{W} Lemma IIe. A modern proof is given in \cite{K}.

\begin{theorem} \label{t2}
	The element $f$ of $L^1 (\Z^d)$ is a unit in $L^1 (\Z^d)$ if and only if $\hf (z) \neq 0$ for all $z \in T^d$.
\end{theorem}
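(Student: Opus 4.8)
The plan is to realise $A := L^1(\Z^d)$ as a commutative unital Banach algebra under convolution (the multiplication of the group algebra), with unit $e$, and to apply Gelfand theory after identifying its space of characters with $T^d$ through the Fourier transform $\wedge$. One implication is immediate. If $f$ is a unit in $A$, choose $g \in A$ with $f * g = e$; applying $\wedge$, which turns convolution into pointwise multiplication and sends $e$ to the constant function $1$, gives $\hf(z)\,\hat{g}(z) = 1$ for every $z \in T^d$, so $\hf$ vanishes nowhere on $T^d$.

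For the converse I would invoke the basic criterion of Gelfand theory for a commutative unital Banach algebra $A$: an element $a$ is invertible if and only if it lies in no maximal ideal, equivalently if and only if $\chi(a) \neq 0$ for every character $\chi \colon A \to \C$ (nonzero multiplicative linear functional). Recall also that every such $\chi$ is automatically continuous with $|\chi(a)| \le \|a\|_1$. Thus everything reduces to describing the characters of $A$ and checking that, under this description, the value $\chi(f)$ is exactly $\hf$ evaluated at a point of $T^d$.

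The heart of the proof — and the step I expect to be the main obstacle — is to show that every character $\chi$ of $A$ is a point evaluation $f \mapsto \hf(z)$ for some $z \in T^d$. Let $u_1, \dots, u_d$ be the standard generators of $\Z^d$, regarded as elements of $A$; each $u_j$ is a unit with inverse $u_j^{-1}$ and $\|u_j\|_1 = \|u_j^{-1}\|_1 = 1$. Putting $z_j = \chi(u_j)$, the identity $z_j\,\chi(u_j^{-1}) = \chi(e) = 1$ together with $|\chi(u_j)| \le 1$ and $|\chi(u_j^{-1})| \le 1$ forces $|z_j| = 1$, so $z := (z_1, \dots, z_d) \in T^d$. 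Multiplicativity and linearity then give $\chi(f) = \hf(z)$ for every finitely supported $f \in \C[\Z^d]$; since $\C[\Z^d]$ is dense in $A$ and both $\chi$ and the map $f \mapsto \hf(z)$ are contractions for the $L^1$-norm (the latter because $|z^\gamma| = 1$), the identity persists on all of $A$. Conversely each $z \in T^d$ clearly defines a character. Feeding this identification of the character space with $T^d$ into the Gelfand criterion gives exactly the statement.

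The only deep external input is the Gelfand invertibility criterion, which I take as known; granting it, the genuine work is the surjectivity of $T^d$ onto the character space, handled by the unimodularity and density argument above. As a self-contained alternative one could follow Wiener's original local method: use $\hf(z_0) \neq 0$ to construct an inverse of $f$ in a neighbourhood of each $z_0 \in T^d$, then glue the local inverses with a smooth partition of unity on the compact torus $T^d$; this avoids abstract Banach-algebra theory at the cost of a longer, more computational argument.
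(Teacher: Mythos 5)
Your proof is correct. The paper gives no proof of Theorem \ref{t2} itself, citing instead Wiener's original argument \cite{W} and the modern proof in \cite{K}; your Gelfand-theoretic argument --- identifying the character space of $L^1(\Z^d)$ with $T^d$ via the unimodularity of the values $\chi(u_j)$ on the generators, then invoking the standard invertibility criterion --- is exactly that modern proof, so it matches the approach the paper points to and needs no correction.
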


Equivalently, since $\hf$ is continuous for $f \in L^1 (\Z^d)$ and since $\Nh\Z^d = L^{\infty} (T^d)$, the theorem asserts that $f$ is a unit in $L^1 (\Z^d)$ if and only if it is a unit in $\Nh \Z^d$. Accordingly one says that a group $\Gamma$ has the Wiener property if for any element $f \in L^1 (\Gamma)$, being a unit in $L^1 (\Gamma)$ is equivalent to being a unit in $\Nh\Gamma$. For example, finitely generated nilpotent groups have this property \cite{Lo} but in general it does not hold. 

In any case, for $\Gamma = \Z^d$ and $\Gamma_n = (n\Z)^d$, theorem \ref{t1} asserts that if the Fourier series $\hf$ of $f \in L^1 (\Z^d)$ does not vanish in any point of $T^d$ we have the formula:
\begin{equation} \label{eq:7}
 \log \ddet_{\Nh\Z^d} f = \lim_{n\to\infty} n^{-d} \sum_{\zeta \in \mu^d_n} \log |\hf (\zeta)| \; .
\end{equation}
Here we have used formula \eqref{eq:5} for $\Gamma = (\Z / n)^d$ noting that the Pontrjagin dual of $\Z / n$ is identified with the group of $n$-th roots of unity $\mu_n$. Of course, using Riemann sums one sees that as a formula for the Mahler measure, equation \eqref{eq:7} holds more generally for all continuous functions $\hf$ on $T^d$ without zeroes. 

It is possible that the assertion of theorem \ref{t2} is valid for all $f \in \Z \Gamma$ without the condition that $f \in L^1 (\Gamma)^{\times}$. For $\Gamma =\Z^d$ this is true by non-trivial diophantine results, \cite{Lin}. For $\Gamma = \Z$, by Jensen's formula the Mahler measure of $f \in \C [\Z]$ can be described in terms of the zeroes of the polynomial $\hf \in \C [z, z^{-1}]$. Writing $\hf (z) = a_m z^m + \ldots + a_r z^r$ with $a_m , a_r$ non-zero, we have:
\begin{align}
 \log \ddet_{\Nh\Z} f = m (\hf) & = \log |a_r| - \sum_{0 < |\alpha| < 1} \log |\alpha| \label{eq:8}\\
& = \log |a_m| + \sum_{|\alpha| > 1} \log |\alpha| \; . \nonumber
\end{align}
Here $\alpha$ runs over the zeroes of $\hf$ with their multiplicities. Despite this seemingly easy decription there are difficult unsolved issues about the Mahler measures of polynomials with integer coefficients. We refer to the survey article \cite{Sm} for background. For Mahler measures of polynomials in several variables there are no simple formulas. However in several cases Mahler measures for suitable $\hf$ in $\Z [z^{\pm 1}_1 , \ldots , z^{\pm} _d]$ are related to special values of $L$-series. See \cite{Bo}, \cite{La}, \cite{RV}. The relation to regulators which we will recall in section \ref{subsec:24} was the starting point for explaining this phenomenon in terms of the Beilinson conjectures.

We end this section with an example where the FK-determinant of a non-commutative group can be calculated in terms of ordinary integrals \cite{D4}. The corresponding formula for entropies was discovered by Lind and Schmidt. Let $\Gamma$ be the integral Heisenberg group with generators $x,y$ and central commutator $z = y^{-1} x^{-1} yx$. For complex polynomials $a_i (y,z)$ in the commuting variables $y$ and $z$ we can form the non-commutative polynomial
\[
 f = \sum^N_{i=0} a_i (y,z) x^i \quad \text{in} \; \C\Gamma \; .
\]
In general $\log \ddet_{\Nh\Gamma} f$ can be expressed as an integral over $S^1$ over certain Ljapunov exponents associated to the companion matrix of $f$, see \cite{D4}. In the simple case where $f = 1 - a (y,z) x$ this reduces to the explicit formula:
\begin{equation} \label{eq:9}
 \log \ddet_{\Nh\Gamma} f = \int_{S^1} \Big( \int_{S^1} \log |a (\eta , \zeta)| \, d\mu (\eta) \Big)^+ \, d\mu (\zeta) \; .
\end{equation}
Here $t^+ = \max (t,0)$. Incidentally, it follows from Jensen's formula that if we view $f$ as an element of $\C [\Z^3]$ we obtain instead the formula:
\begin{equation} \label{eq:10}
 \log \ddet_{\Nh \Z^3} f = \int_{S^1} \int_{S^1} \log^+ |a (\eta , \zeta)| \, d\mu (\eta) \, d\mu (\zeta) \; .
\end{equation}
It follows that for such $f$ we have
\[
 \ddet_{\Nh\Gamma} f \le \log_{\Nh\Z^3} f \; .
\]
In general strict inequality holds here. It would be interesting to try to evaluate \eqref{eq:9} for special polynomials $a \in \Z [y,z]$ and look for relations with arithmetic. 

\subsection{Expansiveness and the entropy of algebraic dynamical systems} \label{subsec:23}
The dynamical systems whose entropy is related to regulators in special cases are defined as follows. For a countable discrete group $\Gamma$ consider a countable $\Gamma$-module $M$. Its Pontrjagin dual $X = \hM$ is then a compact metrizable abelian group on which $\Gamma$ acts by topological group automorphisms. For $\Gamma = \Z^d$ the theory of such systems is explained in \cite{Sch}. See \cite{CL} and \cite{Li} for recent results concerning general groups $\Gamma$. 

The following case is the most relevant for us. For an element $f = \sum a_{\gamma} \gamma$ in $\Z \Gamma$ consider the $\Gamma$-module $M = \Z \Gamma / \Z \Gamma f$ with $\Gamma$ acting by left multiplication. Then $X_f = \hM$ is a subshift of the full shift $(\R / \Z)^{\Gamma} = \widehat{\Z\Gamma}$. It consists of all families $(x_{\gamma})_{\gamma \in \Gamma}$ with $x_{\gamma} \in \R / \Z$ such that for all $\gamma \in \Gamma$ the relation
\[
 \sum_{\gamma'} a_{\gamma'} x_{\gamma\gamma'} = 0
\]
holds in $\R / \Z$. The group $\Gamma$ acts on $x = (x_{\gamma}) \in X_f$ by the formula $(\gamma x)_{\gamma'} = x_{\gamma^{-1} \gamma'}$. It was known for a long time that for $f \in \Z [ \Z^d]$ the $\Z^d$-action on $X_f$ is expansive if and only if $\hf$ has no zeroes on $T^d$. In view of Wiener's theorem \ref{t2} this is equivalent to $f$ being invertible in $L^1 (\Z^d)$. This statement generalizes by a theorem of Choi \cite{C}, (see also \cite{DS} Theorem 3.2).

\begin{theorem} \label{t3}
 For a countable discrete group $\Gamma$ consider $f \in \Z \Gamma$. Then the action of $\Gamma$ on $X_f$ is expansive if and only if $f \in L^1 (\Gamma)^{\times}$.
\end{theorem}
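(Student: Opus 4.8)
The plan is to recast expansiveness as an invertibility problem for a single convolution operator. Since $X_f$ is a compact abelian group and $\Gamma$ acts by automorphisms, I would use the metric given by the supremum of the coordinate distances and note that the action is expansive iff the origin is isolated: there is $\varepsilon>0$ with
\[
N_\varepsilon:=\{x\in X_f : \|x_\gamma\|<\varepsilon \text{ for all }\gamma\}=\{0\},
\]
where $\|x_\gamma\|$ is the distance of $x_\gamma$ to $0$ in $\R/\Z$. Introduce the operator $L_f$ on $L^\infty(\Gamma)$ by $(L_f\xi)(\delta)=\sum_{\gamma'} a_{\gamma'}\,\xi(\delta\gamma')$; it is the adjoint of right convolution $R_f\colon\phi\mapsto\phi\ast f$ on $L^1(\Gamma)$, hence bounded and weak$^*$-continuous, it commutes with the translation $\Gamma$-action, and $\xi\mapsto f\cdot\xi:=L_f\xi$ is a left $L^1(\Gamma)$-module structure on $L^\infty(\Gamma)$. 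The crucial reformulation is that the action is expansive iff $\ker L_f=0$. Indeed, a point of $N_\varepsilon$ with $\varepsilon<1/\|f\|_1$ lifts coordinatewise to $\tilde x\in L^\infty(\Gamma,\R)$ of sup-norm $<\varepsilon$, and since $f\in\Z\Gamma$ the integer $\sum_{\gamma'} a_{\gamma'}\tilde x_{\delta\gamma'}$ has absolute value $<1$, hence vanishes; so $L_f\tilde x=0$ and $\tilde x=0$. Conversely a nonzero element of $\ker L_f$ (its real or imaginary part, as $f$ has real coefficients) can be scaled down and reduced mod $\Z$ to give nonzero points in every $N_\varepsilon$. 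The easy half of the theorem is then immediate: if $f$ has inverse $g$ in $L^1(\Gamma)$, then $f\cdot\xi=0$ forces $\xi=g\cdot(f\cdot\xi)=0$, so $\ker L_f=0$ and the action is expansive.

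The substance is the converse, $\ker L_f=0\Rightarrow f\in L^1(\Gamma)^\times$, and here the plan is first to upgrade injectivity to \emph{boundedness below}. Suppose not: choose $\xi_n$ with $\|\xi_n\|_\infty=1$ and $\|L_f\xi_n\|_\infty\to0$, pick $\gamma_n$ with $|\xi_n(\gamma_n)|\ge\tfrac12$, and use the $\Gamma$-equivariance of $L_f$ to replace $\xi_n$ by its translate so that $|\xi_n(e)|\ge\tfrac12$ while still $\|L_f\xi_n\|_\infty\to0$. Because $\Gamma$ is countable, $L^1(\Gamma)$ is separable and the unit ball of $L^\infty(\Gamma)=L^1(\Gamma)^*$ is weak$^*$ sequentially compact; passing to a weak$^*$ limit $\xi$, weak$^*$-continuity of $L_f$ gives $L_f\xi=0$, while weak$^*$-continuity of evaluation at $e$ (pairing against $\delta_e\in L^1(\Gamma)$) gives $|\xi(e)|\ge\tfrac12$, so $\xi\neq0$, contradicting $\ker L_f=0$. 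I expect this step to be the main obstacle: equivariance is exactly what prevents the extremal mass of the $\xi_n$ from escaping to infinity (where it would be invisible to the weak$^*$ limit), and getting a genuinely nonzero limit is the whole game.

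Once $L_f=(R_f)^*$ is bounded below, the closed-range theorem makes $R_f$ surjective, so $L^1(\Gamma)f=L^1(\Gamma)$ and there is $\phi\in L^1(\Gamma)$ with $\phi f=e$; thus $f$ is left-invertible. To promote this to a two-sided inverse I would invoke direct finiteness of $L^1(\Gamma)$: the element $f\phi$ is idempotent, so $e-f\phi$ is an idempotent in $\Nh\Gamma\supset L^1(\Gamma)$ with $\tau_\Gamma(e-f\phi)=\tau_\Gamma(e)-\tau_\Gamma(\phi f)=0$, and since $\tau_\Gamma$ is a faithful trace on the finite von Neumann algebra $\Nh\Gamma$ (so a one-sided inverse there is automatically two-sided), we get $f\phi=e$ and hence $f\in L^1(\Gamma)^\times$. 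Combining the two implications with the reformulation $\text{expansive}\Leftrightarrow\ker L_f=0$ proves the theorem.
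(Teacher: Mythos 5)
Your proof is correct and takes essentially the same route as the proof the paper refers to (Choi \cite{C}, cf.\ \cite{DS} Theorem 3.2): linearize expansiveness to injectivity of the equivariant convolution operator on $\ell^\infty(\Gamma)$, upgrade injectivity to boundedness below via translation plus weak$^*$-compactness, dualize to obtain a one-sided inverse in $L^1(\Gamma)$, and invoke Kaplansky's direct finiteness for $\Nh\Gamma$ --- precisely the ingredient the paper singles out --- to make it two-sided. Two cosmetic remarks: the supremum metric does not induce the (product) topology of $X_f$, but the isolation criterion $N_\varepsilon=\{0\}$ you actually work with is the standard correct reformulation of expansiveness; and faithfulness of $\tau_\Gamma$ applies to the non-self-adjoint idempotent $e-f\phi$ only after passing to its range projection, which is exactly the content of Kaplansky's theorem.
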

The proof uses Kaplansky's theorem that in the algebra $\Nh\Gamma$ and hence in $L^1 (\Gamma)$ the relation $fg = 1$ implies $gf = 1$.

A nice generalization of this result was obtained by Chung and Li \cite{CL} theorem 3.1

\begin{theorem}\label{t3n}
 The action of $\Gamma$ on $X$ is expansive if and only if the $\Z \Gamma$-module $M = \hX$ is finitely generated and $L^1 (\Gamma) \otimes_{\Z\Gamma} M = 0$.
\end{theorem}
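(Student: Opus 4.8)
The plan is to pass to the Pontryagin dual throughout and rephrase expansiveness as an algebraic solvability statement. Since $X=\hM$ is a compact abelian group on which $\Gamma$ acts by automorphisms, and condition \eqref{eq:1} is translation invariant, the action is expansive if and only if there is a neighbourhood $U$ of $0$ in $X$ such that $\gamma x\in U$ for all $\gamma\in\Gamma$ forces $x=0$. A basic neighbourhood is cut out by a finite set $F\subset M$ and a threshold $\delta>0$ as $U=\{x\in\hM:\|x(m)\|<\delta\ \forall m\in F\}$, where $\|t\|$ denotes the distance of $t\in\R/\Z$ to $0$. Using the dual action $(\gamma x)(m)=x(\gamma^{-1}m)$, the requirement $\gamma x\in U$ for all $\gamma$ reads $\|x(m')\|<\delta$ for all $m'\in\Gamma F$, where $\Gamma F=\{\gamma m:\gamma\in\Gamma,\ m\in F\}$. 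Thus expansiveness is equivalent to the existence of a finite $F$ and $\delta>0$ for which smallness of $x$ on the whole orbit $\Gamma F$ forces $x=0$.

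First I would dispose of finite generation. If the $\Z\Gamma$-submodule $\Z\Gamma F$ were proper in $M$, then any nonzero character of $M/\Z\Gamma F$ would give a nonzero $x\in\hM$ vanishing on $\Gamma F$, hence lying in the set above for every $\delta$, contradicting expansiveness; so an expansive action forces $M=\Z\Gamma F$, matching the hypothesis of the converse. Having fixed $F=\{m_1,\dots,m_k\}$, I would choose a presentation $0\to K\to\Z\Gamma^k\xrightarrow{\pi}M\to0$ with $\pi(e_i)=m_i$, dualize it to realize $\hM=K^{\perp}\subset((\R/\Z)^{\Gamma})^k$, and restate the criterion as: $0$ is isolated in $K^{\perp}$ for the sup-norm. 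Applying $L^1(\Gamma)\otimes_{\Z\Gamma}-$ to the presentation identifies $L^1(\Gamma)\otimes_{\Z\Gamma}M\cong L^1(\Gamma)^k/L^1(\Gamma)K$, so the vanishing in the statement means precisely $L^1(\Gamma)K=L^1(\Gamma)^k$, i.e. each $e_i$ is a finite combination $\sum_j h_{ij}r_j$ with $h_{ij}\in L^1(\Gamma)$ and $r_j\in K$.

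The sufficiency direction is then a direct estimate generalizing the easy half of Theorem \ref{t3}. Given such expressions for the $e_i$, fix $\delta<1/\max_j\|r_j\|_1$ and take any $x\in K^{\perp}$ of sup-norm $<\delta$, lifted to $\tilde x\in\ell^{\infty}(\Gamma)^k$. For every $\gamma$ and $j$ the pairing $\langle\tilde x,\gamma r_j\rangle$ is an integer, because $\gamma r_j\in K$ and $x\in K^{\perp}$; it is also bounded by $\|\tilde x\|_{\infty}\|r_j\|_1<1$, hence is zero. Expanding $\tilde x_i(\gamma)=\langle\gamma^{-1}\tilde x,e_i\rangle=\sum_j\langle\gamma^{-1}\tilde x,h_{ij}r_j\rangle$ and distributing the $L^1$-coefficient over these vanishing integer pairings gives $\tilde x\equiv0$. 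Thus $M$ finitely generated together with $L^1(\Gamma)\otimes_{\Z\Gamma}M=0$ implies expansiveness.

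The reverse implication is the crux. From isolation of $0$ one first gets, by Hahn--Banach in the duality $(L^1(\Gamma)^k)^{*}=\ell^{\infty}(\Gamma)^k$, only that $L^1(\Gamma)K$ is dense: a nonzero functional annihilating it would be a $\psi\in\ell^{\infty}(\Gamma)^k$ with $\langle\psi,K\rangle=0$, and then the reduction modulo $\Z$ of a small scalar multiple $t\psi$ would be a nonzero sup-norm-small point of $K^{\perp}$, contradicting expansiveness. The difficulty is to upgrade this density to the exact equality $L^1(\Gamma)K=L^1(\Gamma)^k$ demanded by the algebraic tensor product; density uses only the ``real'' solutions, whereas equality must exploit the full integrality in the expansive condition. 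Here my plan is to produce, from the expansive constant, summable homoclinic points of $X$ --- equivalently elements $w_{ij}\in\ell^1(\Gamma)=L^1(\Gamma)$ solving $\sum_j w_{ij}r_j=e_i$ --- which furnish the required coefficients directly, invoking Kaplansky's theorem (as in Theorem \ref{t3}) to convert one-sided into two-sided solvability when reducing to a square presentation. I expect the construction of enough summable homoclinic points, i.e. closing the gap between dense and surjective, to be the main obstacle, since mere density of $L^1(\Gamma)K$ is strictly weaker than the vanishing of $L^1(\Gamma)\otimes_{\Z\Gamma}M$.
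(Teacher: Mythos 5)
The paper itself contains no proof of this statement: it is quoted from Chung--Li \cite{CL}, Theorem 3.1, so your proposal has to be judged against the mathematics rather than against an argument in the text. Much of what you write is correct: the translation of expansiveness into ``$0$ is isolated in $K^{\perp}\subset((\R/\Z)^{\Gamma})^k$ for the sup-norm'', the argument that expansiveness forces $M=\Z\Gamma F$ (finite generation), the identification $L^1(\Gamma)\otimes_{\Z\Gamma}M\cong L^1(\Gamma)^k/L^1(\Gamma)K$, the sufficiency direction via the integrality estimate (this is indeed the easy half of Theorem \ref{t3} generalized), and the Hahn--Banach argument showing that expansiveness makes $L^1(\Gamma)K$ dense in $L^1(\Gamma)^k$. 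But the necessity direction is not finished, and you say so yourself: you end with a plan whose key step --- producing exact $\ell^1$-solutions $\sum_j w_{ij}r_j=e_i$ --- you expect to be ``the main obstacle''. Moreover, the route you sketch for that step is misdirected: for a general countable group $\Gamma$ the ring $\Z\Gamma$ is not Noetherian, so the relation module $K$ need not be finitely generated and there is no ``square presentation'' to reduce to; and Kaplansky's theorem ($fg=1\Rightarrow gf=1$ in $\Nh\Gamma$, hence in $L^1(\Gamma)$) concerns elements or square matrices, not the rectangular solvability problem you actually face. As it stands, the proposal proves one implication plus a density statement.

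The gap is real but can be closed with far less than you anticipate: the passage from dense to exact is a Neumann-series bootstrap, available precisely because the elements you must represent are the $k$ generators of a free module. Density gives a single finite family $r_1,\dots,r_m\in K$ and a matrix $G\in M_{k\times m}(L^1(\Gamma))$ such that, writing $R\in M_{m\times k}(\Z\Gamma)$ for the matrix with rows $r_j$, one has $\|I_k-GR\|<1$ in the Banach algebra $M_k(L^1(\Gamma))$, where the norm is the maximum over rows of the sum of the $\ell^1$-norms of the entries (this norm is submultiplicative). Then $GR$ is invertible in $M_k(L^1(\Gamma))$ with inverse $\sum_{n\ge 0}(I_k-GR)^n$, and $H:=(GR)^{-1}G$ satisfies $HR=I_k$, i.e.\ each $e_i=\sum_j H_{ij}r_j$ lies in $L^1(\Gamma)K$ exactly. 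In other words, a dense $L^1(\Gamma)$-submodule of $L^1(\Gamma)^k$ is automatically all of $L^1(\Gamma)^k$, so your worry that density is ``strictly weaker'' than vanishing of the tensor product is unfounded in this situation; no homoclinic-point construction and no Kaplansky theorem are needed. Inserting this short lemma after your Hahn--Banach step turns your outline into a complete proof of the theorem.
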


For $\Gamma = \Z^d$ expansiveness can be characterized as follows where condition 1) is part of \cite{Sch} Theorem 6.5 and condition 2) is \cite{Br} Theorem 5.2 or independently \cite{CL} Corollary 3.2.

\begin{theorem}\label{t3a}
 The $\Z^d$-action on $X$ is expansive if and only if $M$ is a finitely generated $R_d = \Z [ \Z^d]$-module and one of the following equivalent conditions holds:\\
1) For every associated prime ideal $\ep$ of the $R_d$-module $M$ the zero variety $V (\ep)$ of $\ep$ in $(\C^{\times})^d$ has empty intersection with $T^d$.\\ 
2) The module $M$ is $S_{\infty}$-torsion, where $S_{\infty} \subset R_d$ is the multiplicative system $S_{\infty} = R_d \cap L^1 (\Z^d)^{\times}$. 
\end{theorem}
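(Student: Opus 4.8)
The plan is to reduce Theorem~\ref{t3a} to the already-stated expansiveness criterion of Chung--Li (Theorem~\ref{t3n}), which says that the $\Z^d$-action on $X = \hM$ is expansive if and only if $M$ is finitely generated over $R_d$ and $L^1(\Z^d) \otimes_{R_d} M = 0$. Thus I first assume finite generation of $M$ throughout, and the whole task becomes: show that the vanishing $L^1(\Z^d) \otimes_{R_d} M = 0$ is equivalent to each of conditions 1) and 2). Since $R_d = \Z[\Z^d]$ is Noetherian, I can work with a primary decomposition of $M$ and exploit the flatness-type behavior of localization, because $L^1(\Z^d)$ is built from $R_d$ by inverting exactly the multiplicative system $S_\infty = R_d \cap L^1(\Z^d)^\times$.

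\textbf{Equivalence of 2) with the Chung--Li condition.} I would argue that $L^1(\Z^d) \otimes_{R_d} M = 0$ holds if and only if $M$ is $S_\infty$-torsion. The key is that $S_\infty^{-1} R_d \hookrightarrow L^1(\Z^d)$ and that $L^1(\Z^d)$ is faithfully related to this localization on finitely generated modules. Concretely, if $M$ is $S_\infty$-torsion, then $S_\infty^{-1} M = 0$, and since $L^1(\Z^d)$ is an $S_\infty^{-1}R_d$-algebra, $L^1(\Z^d)\otimes_{R_d} M = L^1(\Z^d)\otimes_{S_\infty^{-1}R_d} S_\infty^{-1}M = 0$. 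For the converse I would use finite generation: pick generators $m_1,\dots,m_k$ of $M$; vanishing of the tensor product forces each $m_i$ to be killed by an element of $S_\infty$ after clearing denominators, which is precisely the statement that $M$ is $S_\infty$-torsion. The only delicate point is checking that no genuine elements survive in the analytic completion that were not already seen in the algebraic localization; here Wiener's theorem (Theorem~\ref{t2}) and Theorem~\ref{t1} control units of $L^1(\Z^d)$ well enough.

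\textbf{Equivalence of 1) with 2).} This is the commutative-algebra heart. By Wiener's theorem, $s \in R_d$ lies in $S_\infty = R_d \cap L^1(\Z^d)^\times$ if and only if $\hat s$ has no zeros on $T^d$, i.e.\ $V(s) \cap T^d = \emptyset$ in $(\C^\times)^d$. I would then invoke the standard fact that, for a finitely generated module over a Noetherian ring, an element $s$ acts injectively (and the module is $s$-torsion for suitable powers) governed by the associated primes: $M$ is $S_\infty$-torsion if and only if every associated prime $\ep \in \mathrm{Ass}(M)$ meets $S_\infty$, i.e.\ contains some $s$ with $V(s)\cap T^d = \emptyset$. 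Translating through the Nullstellensatz, $\ep$ meets $S_\infty$ precisely when $V(\ep)$, being contained in $V(s)$ for such an $s$, has $V(\ep)\cap T^d = \emptyset$. Conversely if $V(\ep)\cap T^d=\emptyset$, compactness of $T^d$ together with the fact that $T^d$ is disjoint from the closed set $V(\ep)$ lets me produce a single $s \in \ep$ (or a product of such) whose zero set avoids $T^d$, placing $s \in S_\infty \cap \ep$.

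\textbf{The main obstacle} I anticipate is precisely this last compactness-to-algebra step: producing, from the mere disjointness $V(\ep)\cap T^d = \emptyset$, an honest polynomial $s \in \ep$ with $\hat s$ nonvanishing on $T^d$. The issue is that $\ep$ is a prime of $R_d$ but $T^d$ lives in the analytic space $(\C^\times)^d$, so one must pass carefully between the Zariski-closed $V(\ep)$ and the compact real torus; the containment $V(\ep) \subset V(s)$ is what forces $\hat s|_{T^d}\neq 0$, and extracting such an $s$ from the ideal $\ep$ when $V(\ep)$ avoids $T^d$ requires the real Nullstellensatz flavor of argument or an explicit use of compactness of $T^d$ to separate it from $V(\ep)$. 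I would handle it by covering $T^d$ with finitely many points away from $V(\ep)$, at each choosing a generator of $\ep$ not vanishing there, and then forming a suitable combination; verifying that the resulting element genuinely lies in $\ep$ and has no zeros on all of $T^d$ is the step demanding the most care, and it is essentially where the cited references \cite{Sch}, \cite{Br}, \cite{CL} do their real work.
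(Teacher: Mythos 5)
The paper itself does not prove Theorem \ref{t3a}: it is a survey statement, attributed to \cite{Sch} Theorem 6.5 (condition 1), \cite{Br} Theorem 5.2 and \cite{CL} Corollary 3.2 (condition 2). So your proposal can only be compared with the route of those references; your plan of deducing everything from Theorem \ref{t3n} is in fact exactly how Chung--Li derive their Corollary 3.2, and your reformulation of condition 2) via associated primes (a finitely generated module over a Noetherian ring is $S$-torsion if and only if every associated prime meets $S$) is a correct standard fact. However, both steps that you yourself flag as delicate are genuinely gapped, and in each case the missing ingredient is a specific idea, not a routine verification. First gap: the implication ``$L^1(\Z^d)\otimes_{R_d}M=0$ implies $M$ is $S_\infty$-torsion'' is not obtained by ``clearing denominators''. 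That argument would be legitimate if $L^1(\Z^d)$ were the localization $S_\infty^{-1}R_d$, or at least faithfully flat over it, but it is neither: for a general algebra $B$ over $A$ and finitely generated $M$, the vanishing $B\otimes_A M=0$ only says that a presentation matrix of $M$ acquires a left inverse over $B$; it does not produce a single element of $A\cap B^{\times}$ annihilating the generators (an ideal can generate the unit ideal of $B$ without containing any unit of $B$). Appealing to Wiener's theorem and Theorem \ref{t1} to ``control units well enough'' is not an argument. A correct and economical repair is to prove instead that the Chung--Li condition implies condition 1), and then close the cycle 1) $\Rightarrow$ 2) $\Rightarrow$ Chung--Li: if some associated prime $\ep$ had a point $z_0\in V(\ep)\cap T^d$, then evaluation $f\mapsto \hf(z_0)$ is a continuous algebra homomorphism $L^1(\Z^d)\to\C$ extending $R_d\to\C$; the prime $\mathfrak{m}_{z_0}=\{f\in R_d : \hf(z_0)=0\}$ contains $\mathrm{Ann}(M)$, hence lies in $\mathrm{Supp}(M)$, so Nakayama gives $M\otimes_{R_d}\kappa(\mathfrak{m}_{z_0})\neq 0$ and therefore $M\otimes_{R_d}\C_{z_0}\neq 0$; but $M\otimes_{R_d}\C_{z_0}$ is a base change of $L^1(\Z^d)\otimes_{R_d}M$, contradicting its vanishing.

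Second gap: in 1) $\Rightarrow$ 2) your recipe for producing $s\in\ep\cap S_\infty$ from $V(\ep)\cap T^d=\emptyset$ --- cover $T^d$ by finitely many points, pick a generator of $\ep$ not vanishing at each, and ``form a suitable combination'' --- is not a proof: non-vanishing at finitely many chosen points imposes only finitely many linear conditions, whereas the combination must be non-vanishing simultaneously at the uncountably many points of $T^d$, and nothing in the sketch forces this; your text concedes the point by deferring it to the references. The missing idea is the Hermitian square trick. Since condition 1) gives $V(\mathrm{Ann}(M))\cap T^d=\emptyset$ (the variety of the annihilator is the union of the varieties of the minimal associated primes), choose generators $f_1,\dots,f_k$ of $\mathrm{Ann}(M)$ and set $f_j^*(z):=f_j(z_1^{-1},\dots,z_d^{-1})\in R_d$; as the coefficients are integers, on $T^d$ one has $f_j^*(z)=\overline{f_j(z)}$. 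Then $s:=\sum_j f_jf_j^*$ lies in $\mathrm{Ann}(M)$ and $\hat{s}(z)=\sum_j|\hat{f}_j(z)|^2>0$ for all $z\in T^d$ because the $f_j$ have no common zero there; by Wiener's Theorem \ref{t2} this means $s\in S_\infty$, so $sM=0$ and $M$ is $S_\infty$-torsion. With these two repairs (and the trivial directions you already have correctly), your overall architecture is sound and does reproduce the theorem.
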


Note that then $M$ has to be an $R_d$-torsion module.

For $f \in \Z \Gamma$ let $h (f)$ be the entropy of the $\Z\Gamma$-action on $X_f$. For $\Gamma = \Z^d$ it was determined in \cite{LSW} building on ideas of Sinai and Yuzvinskii who dealt with the case $d = 1$:

\begin{theorem} \label{t4}
 For non-zero $f \in \Z [\Z^d]$ we have the formula $h (f) = m (\hf) $.
\end{theorem}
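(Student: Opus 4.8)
The plan is to compute $h(f)$ by counting periodic points, to evaluate the count as a product of values of $\hf$ via the finite Fourier transform, and so to recognize the entropy as a limit of Riemann sums for $m(\hf)$. I would begin with the F{\o}lner--residual sequence $\Gamma_n = (n\Z)^d$. By Pontrjagin duality $\Fix_{\Gamma_n}(X_f) = \Fix_{\Gamma_n}(\hM)$ is the dual group of the $\Gamma_n$-coinvariants $M_{\Gamma_n} = M / I_{\Gamma_n} M$, with $I_{\Gamma_n} \subset \Z\Gamma$ the augmentation ideal of $\Gamma_n$. Since $M = R_d / R_d f$, one gets $M_{\Gamma_n} = R_d^{(n)} / R_d^{(n)} f^{(n)}$ with $R_d^{(n)} = \Z[(\Z/n)^d] = \Z\Gamma^{(n)}$ and $f^{(n)}$ the image of $f$ under integration along the fibres. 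As $R_d^{(n)}$ is free of rank $n^d$ over $\Z$, the quotient is finite precisely when multiplication by $f^{(n)}$ is injective, in which case $|M_{\Gamma_n}| = |\ddet\, \ell(f^{(n)})|$; and the finite Fourier transform on $(\Z/n)^d$ diagonalizes this multiplication, the eigenvalue at the character indexed by $\zeta \in \mu^d_n$ being $\hf(\zeta)$. Hence, whenever $\hf$ has no zero among the $n$-th roots of unity,
\[ \frac{1}{(\Gamma : \Gamma_n)} \log |\Fix_{\Gamma_n}(X_f)| = \frac{1}{n^d} \sum_{\zeta \in \mu^d_n} \log |\hf (\zeta)| , \]
the Riemann sum appearing in \eqref{eq:7}.

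Suppose next that $\hf$ has no zeros on $T^d$. Then $f \in L^1(\Z^d)^{\times}$ by Wiener's theorem \ref{t2}, the $\Z^d$-action on $X_f$ is expansive by Theorem \ref{t3} (equivalently \ref{t3a}), and every $\Fix_{\Gamma_n}(X_f)$ is finite. Since $\log|\hf|$ is then continuous on $T^d$, the Riemann sums above converge to $\int_{T^d}\log|\hf|\,d\mu = m(\hf)$ by \eqref{eq:6} and \eqref{eq:7}, so that $h^{\per} = m(\hf)$. For expansive algebraic $\Z^d$-actions the entropy is computed by periodic points, i.e. $h = h^{\per}$ (cf. Section \ref{subsec:21}), and therefore $h(f) = m(\hf)$ in this case.

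The remaining, and genuinely harder, case is the non-expansive one, where $\hf$ vanishes somewhere on $T^d$. Here two difficulties appear at once: if $\hf(\zeta) = 0$ for some root of unity $\zeta \in \mu^d_n$ then $M_{\Gamma_n}$ is infinite and the periodic-point count breaks down, and the identity $h = h^{\per}$ is no longer available. I would approach this by approximation, perturbing $f$ to a family $f_k \in \Z[\Z^d]$ with $\hf_k$ nonvanishing on $T^d$ and $f_k \to f$, so that the expansive computation applies to each $f_k$. The crux is then to show that $h(f_k) \to h(f)$ and $m(\hf_k) \to m(\hf)$ simultaneously, i.e. that the zeros of $\hf$ on $T^d$ contribute no entropy. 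For the Mahler measure this rests on the finiteness and continuity of $m$, reflecting that $\log|\hf|$ has only integrable logarithmic singularities along its zero set; for the entropy it rests on suitable semicontinuity of $h$ under such approximation, together with the fact that the Riemann sums $n^{-d}\sum_{\zeta}\log|\hf(\zeta)|$ still converge to $m(\hf)$ in the presence of torus zeros, which in turn requires a Diophantine lower bound on $|\hf(\zeta)|$ at roots of unity (this is where the arithmetic of \cite{Lin} enters). Controlling the entropy across this non-expansive boundary is the step I expect to be the main obstacle; the structural alternative would be to invoke Yuzvinskii's addition formula for entropy along a prime filtration of $M$, reducing the general principal module to pieces already covered by the expansive analysis.
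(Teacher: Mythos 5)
First, a point of reference: the paper does not prove Theorem \ref{t4} at all --- it quotes the result from \cite{LSW} --- so your attempt has to be measured against the Lind--Schmidt--Ward argument rather than against anything in the text. Your treatment of the expansive case is correct and is exactly the mechanism the surrounding text records: Pontrjagin duality identifies $\Fix_{\Gamma_n}(X_f)$ with the dual of $\Z[(\Z/n)^d]/(f^{(n)})$, the finite Fourier transform diagonalizes multiplication by $f^{(n)}$ with eigenvalues $\hf(\zeta)$ for $\zeta\in\mu^d_n$, so the normalized fixed-point count is the Riemann sum of \eqref{eq:7}, and continuity of $\log|\hf|$ on $T^d$ together with $h=h^{\per}$ for expansive $X_f$ (Theorem \ref{t6}; one should use a proof of that identity, e.g. via specification, which does not itself pass through Theorem \ref{t4}) gives $h(f)=m(\hf)$.

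The gap is that the non-expansive case is the entire content of the theorem as stated, and none of your three fallback routes closes it. Take Smyth's example $f=1+z_1+z_2$: it is irreducible, $\hf$ vanishes on $T^2$ exactly at $(\omega,\omega^2)$ and $(\omega^2,\omega)$ with $\omega$ a primitive cube root of unity, and $m(\hf)=L'(-1,\chi_{-3})>0$, cf. \cite{Sm}. Whenever $3\mid n$ these zeros lie in $\mu^2_n$, so $\Fix_{(n\Z)^2}(X_f)$ is an \emph{infinite} compact group; hence $h^{\per}(f)$, as defined in the paper, does not exist --- the periodic-point quantity is not merely uncomputable but undefined, so no Diophantine repair of the Riemann sums via \cite{Lin} can rescue this route. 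The same example defeats the prime-filtration fallback: $R_2/R_2f$ is already a prime cyclic module, so Yuzvinskii's addition formula gives no reduction, and in general the irreducible factors of $f$ may themselves vanish on $T^d$, i.e. the filtration pieces are not ``already covered by the expansive analysis''. Finally, the perturbation route is ill-posed before it is unjustified: $\Z[\Z^d]$ is discrete in $L^1(\Z^d)$, so any sequence $f_k\in\Z[\Z^d]$ converging to $f$ in a norm strong enough to control anything is eventually equal to $f$ unless the supports of the $f_k$ grow without bound, and for such sequences (or for real-coefficient perturbations, which define no dynamical system at all) there is no factor-map or common-extension relation between $X_{f_k}$ and $X_f$ from which semicontinuity of entropy could be extracted; proving such a continuity statement is essentially equivalent to proving the theorem. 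This is precisely why \cite{LSW} do not reduce to the expansive case: after the addition formula reduces to irreducible $f$, they establish the inequalities $h\le m(\hf)$ and $h\ge m(\hf)$ directly, and the lower bound in the presence of zeros of $\hf$ on $T^d$ is the technical heart of their paper, for which your proposal offers no substitute.
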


After realizing that the Mahler measure is the logarithm of a Fuglede--Kadison determinant \eqref{eq:6} it was clear what the entropy $h (f)$ should be for general $\Gamma$. In the papers \cite{D2}, \cite{D3} the expansive case was treated under certain assumptions by two different methods. After overcoming all the remaining technical difficulties of the first approach in \cite{D2}, Li managed to prove the following result in \cite{Li}:

\begin{theorem} \label{t5}
 Let $\Gamma$ be a countable amenable discrete group and assume that $f \in \Z \Gamma$ is a unit in $\Nh\Gamma$. Then we have
\begin{equation} \label{eq:11}
 h (f) = \log \ddet_{\Nh\Gamma} f \; .
\end{equation}
\end{theorem}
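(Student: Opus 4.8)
The plan is to establish the two inequalities $h(f) \le \log\ddet_{\Nh\Gamma} f$ and $h(f) \ge \log\ddet_{\Nh\Gamma} f$ separately, in each case passing from the infinite group $\Gamma$ to finite-dimensional data along a F{\o}lner sequence $F_n$. Amenability enters twice: it is what makes the entropy well defined, and it is what allows the trace $\tau_{\Gamma}$ --- and hence the Fuglede--Kadison determinant --- to be approximated by normalized finite matrix traces. The cleanest situation is when $\Gamma$ is in addition residually finite, and I would treat that case first as a model: picking $\Gamma_n \to e$, one identifies $\Fix_{\Gamma_n}(X_f)$ with the Pontrjagin dual of $\Z\Gamma^{(n)}/\Z\Gamma^{(n)} f^{(n)}$, so that $|\Fix_{\Gamma_n}(X_f)| = |\ddet\,\ell(f^{(n)})|$, deduces $h^{\per}(f) = \lim_n |\Gamma^{(n)}|^{-1}\log|\ddet\,\ell(f^{(n)})| = \log\ddet_{\Nh\Gamma} f$ from Theorem~\ref{t1}, and finally invokes $h(f) = h^{\per}(f)$ in the expansive case. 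This already proves \eqref{eq:11} for residually finite $\Gamma$ and $f$ a unit in $L^1(\Gamma)$, and recovers Theorem~\ref{t4}.

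For the determinant side in general I would write, using that $f$ is a unit, $\log\ddet_{\Nh\Gamma} f = \halb\,\tau_{\Gamma}(\log(f^* f))$ and approximate $\tau_{\Gamma}$ by the finite traces $\varphi \mapsto |F_n|^{-1}\,\tr(P_n \varphi P_n)$, where $P_n$ is the orthogonal projection of $L^2(\Gamma)$ onto $\ell^2(F_n)$. Because $f^*f$ has integer coefficients supported on a fixed finite set, the compressions $P_n (f^* f) P_n$ differ from the honest group-algebra products only near the boundary of $F_n$, and the F{\o}lner condition $|F_n\gamma \bigtriangleup F_n|/|F_n| \to 0$ forces this boundary contribution to the trace of any fixed polynomial in $f^*f$ to vanish in the limit. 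Combining this with a spectral approximation of $\log$ on the spectrum of $|f|$ --- which is bounded away from $0$ precisely because $f$ is a unit --- would yield $\log\ddet_{\Nh\Gamma} f = \lim_n |F_n|^{-1}\log\ddet(P_n |f| P_n)$.

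On the dynamical side I would estimate $s_{F_n}(\varepsilon)$ directly. A point of $X_f$ is a family $(x_{\gamma}) \in (\R/\Z)^{\Gamma}$ annihilated by all translates of $f$; restricting attention to indices in $F_n$ and using the relations to express boundary values in terms of interior ones, the set of $F_n$-patterns that occur is, up to a controlled boundary error, the image of $(\R/\Z)^{F_n}$ under the map with matrix $P_n \ell(f) P_n$. The maximal number of $\varepsilon$-separated such patterns is then comparable to the covolume $|\ddet(P_n \ell(f) P_n)|$, so that both the upper and the lower limit of $|F_n|^{-1}\log s_{F_n}(\varepsilon)$ are squeezed between quantities of the form $\log\ddet(P_n|f|P_n)$. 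Letting $n \to \infty$ and then $\varepsilon \to 0$ matches the dynamical count with the determinant computed in the previous step.

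The hard part is exactly the control of the boundary terms when $f$ is only assumed to be a unit in $\Nh\Gamma$ rather than in $L^1(\Gamma)$. In that regime $f^{-1}$ need not lie in $L^1(\Gamma)$, the action on $X_f$ need not be expansive by Theorem~\ref{t3}, and there is no periodic-point shortcut; the finite compressions $P_n\ell(f)P_n$ need not be invertible and the naive covolume count can both over- and underestimate $s_{F_n}(\varepsilon)$. Overcoming this is the technical heart of the argument of \cite{Li}: one must show that the rank deficiencies and the small singular values of $P_n|f|P_n$ are of sub-F{\o}lner order, which amounts to a determinant-approximation theorem for $\tau_{\Gamma}$ uniform enough to survive the $\varepsilon \to 0$ limit. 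I would isolate this as the key lemma --- an amenable analogue of the L\"uck--Szeg\H{o} limit theorem asserting that $|F_n|^{-1}\log\ddet(P_n\Phi P_n) \to \log\ddet_{\Nh\Gamma}\Phi$ for positive $\Phi$ bounded below --- and build both inequalities on top of it.
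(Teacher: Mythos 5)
A point of order first: this survey does not actually prove Theorem \ref{t5}. It records it as a theorem of Li \cite{Li}, obtained by completing the F{\o}lner-compression approach begun in \cite{D2}, so your proposal can only be measured against that cited strategy and against the special cases the paper does establish. Measured this way, your first paragraph is correct and complete in outline: for residually finite $\Gamma$ and $f \in L^1(\Gamma)^{\times}$, the identification $|\Fix_{\Gamma_n}(X_f)| = |\ddet\,\ell(f^{(n)})|$ together with Theorem \ref{t6} and Theorem \ref{t1} gives \eqref{eq:11}, exactly as in \cite{DS}. Two caveats there: this does not ``recover Theorem \ref{t4}'', which has no invertibility hypothesis and covers non-expansive $f$ as well; and once $f$ is only a unit in $\Nh\Gamma$, the integration-along-fibres map $f \mapsto f^{(n)}$ has no extension from $L^1(\Gamma)$ to $\Nh\Gamma$, the quotients $f^{(n)}$ may be singular and $\Fix_{\Gamma_n}(X_f)$ infinite, so the periodic-point shortcut is genuinely unavailable --- as you note.

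The gap is in the general case, and you locate the difficulty in the wrong place. The ``L\"uck--Szeg\H{o}'' lemma you isolate as the key step --- $|F_n|^{-1}\log\ddet(P_n\Phi P_n) \to \log\ddet_{\Nh\Gamma}\Phi$ for positive $\Phi$ with finitely supported coefficients and $\Phi \ge c > 0$ --- is the routine half of the argument: $(P_n\Phi P_n)^k - P_n\Phi^k P_n$ has rank $O(|\partial F_n|) = o(|F_n|)$ and bounded norm, so the normalized traces of all powers converge to $\tau_{\Gamma}(\Phi^k)$, and since all the spectra lie in $[c,\|\Phi\|]$ one concludes by uniform polynomial approximation of $\log$ on that interval. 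What is actually hard is precisely the step your third paragraph passes over as a heuristic: the projection of $X_f$ to $(\R/\Z)^{F_n}$ is \emph{not} the image (or kernel) of $P_n\ell(f)P_n$ up to a controlled boundary error, and the count of $\varepsilon$-separated patterns is not ``comparable to a covolume'' by any soft argument. When $f^{-1}$ exists in $L^1(\Gamma)$ one can localize the defining relations of $X_f$ to $F_n$ because the inverse has summable, hence truncatable, coefficients; when $f^{-1}$ exists only in $\Nh\Gamma$ its matrix coefficients need not be summable, the action is not expansive, and no fixed separation scale $\varepsilon$ survives the localization, so both your upper and lower bounds for $s_{F_n}(\varepsilon)$ break down simultaneously. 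Proving that the separated-point counts are nevertheless governed by the finite determinants, with errors of sub-F{\o}lner order uniformly enough to survive $\varepsilon \to 0$, is not a quotable lemma: it \emph{is} the theorem, i.e. exactly the content of \cite{Li} that goes beyond \cite{D2}. As it stands, your proposal establishes \eqref{eq:11} only in the expansive residually finite case already covered by Theorems \ref{t1} and \ref{t6}, and for the remaining (and titular) generality it is a correct description of the framework with the central estimate left unproved.
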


In particular, formula \eqref{eq:11} holds for all expansive systems $X_f$. More generally, I think that \eqref{eq:11} should hold whenever $f$ is not a zero-divisor in $\R \Gamma$. This is true for $\Gamma = \Z^d$ and trivially for finite groups $\Gamma$. Work in progress by Hanfeng Li and Andreas Thom apparently settles the general case. Under suitable assumptions the relation \eqref{eq:11} also holds for Lewis Bowen's entropy if $\Gamma$ is not amenable, c.f. \cite{B}. In \cite{DS} Theorem 5.7 it was shown that for expansive systems $X_f$ the entropy can be calculated in terms of periodic points:

\begin{theorem} \label{t6}
 Let $\Gamma$ be a residually finite amenable group and assume that $f \in \Z \Gamma$ is a unit in $L^1 (\Gamma)$ i.e. that the $\Gamma$-action on $X_f$ is expansive. Then we have the formula
\[
 h (f) = h^{\per} (f) = \lim_{n\to\infty} \frac{1}{(\Gamma : \Gamma_n)} \log |\Fix_{\Gamma_n} (X_f)|
\]
for any choice of sequence $\Gamma_n \to e$.
\end{theorem}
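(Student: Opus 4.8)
The plan is to reduce everything to two facts already in hand: that the entropy equals the Fuglede--Kadison determinant (Theorem \ref{t5}), and that this determinant is a limit of normalized finite determinants (Theorem \ref{t1}). Since $f$ is a unit in $L^1 (\Gamma)$ it is a unit in $\Nh\Gamma$, so Theorem \ref{t5} applies and gives $h (f) = \log \ddet_{\Nh\Gamma} f$, while Theorem \ref{t1} rewrites this as $\log \ddet_{\Nh\Gamma} f = \lim_{n\to\infty} \frac{1}{(\Gamma : \Gamma_n)} \log |\ddet\, \ell (f^{(n)})|$. Thus the whole statement follows once I identify the count of $\Gamma_n$-periodic points with the finite determinant, i.e.\ once I establish
\[
 |\Fix_{\Gamma_n} (X_f)| = |\ddet\, \ell (f^{(n)})| \quad \text{for all large } n \; .
\]

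First I would unwind the Pontrjagin duality. Since $X_f = \hM$ with $M = \Z\Gamma / \Z\Gamma f$ and $\Gamma$ acting through the dual of left multiplication on $M$, a point of $X_f$ fixed by the normal subgroup $\Gamma_n$ is exactly a character of $M$ annihilating $(\gamma - 1) m$ for all $\gamma \in \Gamma_n$ and $m \in M$. Hence $\Fix_{\Gamma_n} (X_f)$ is the Pontrjagin dual of the $\Gamma_n$-coinvariants $M_{\Gamma_n}$. Reducing modulo the (two-sided, as $\Gamma_n$ is normal) augmentation ideal of $\Z\Gamma_n$ identifies $M_{\Gamma_n}$ with $\Z [\Gamma^{(n)}] / \Z [\Gamma^{(n)}] f^{(n)}$, which is the cokernel of right multiplication by $f^{(n)}$ on the free abelian group $\Z [\Gamma^{(n)}]$ of rank $(\Gamma : \Gamma_n)$.

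Next I would count. Because the action is expansive, $\Fix_{\Gamma_n} (X_f)$ is finite, so its dual $M_{\Gamma_n}$ is a finite abelian group of the same cardinality; finiteness of this cokernel forces the integer determinant of right multiplication by $f^{(n)}$ to be nonzero, and then $|M_{\Gamma_n}|$ equals its absolute value. Finally, left and right multiplication by an element of a finite group ring have determinants of equal absolute value---both compute, up to complex conjugation of the factors, to $\prod_{\rho} (\ddet \rho (f^{(n)}))^{\dim \rho}$ over the irreducible representations $\rho$ of $\Gamma^{(n)}$---so $|\ddet\, \ell (f^{(n)})|$ is this same number. Assembling the displayed identity with Theorems \ref{t1} and \ref{t5} yields the existence of the periodic-point limit, its independence of the chosen sequence $\Gamma_n \to e$ (inherited from the intrinsic value $\ddet_{\Nh\Gamma} f$ in Theorem \ref{t1}), and the equalities $h (f) = h^{\per} (f) = \log \ddet_{\Nh\Gamma} f$.

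With Theorems \ref{t1} and \ref{t5} granted, no single step is deep; the care lies entirely in the bookkeeping. The main obstacle I anticipate is the duality--coinvariants computation together with the left--right determinant symmetry: one must correctly track that $\Z\Gamma f$ is the image of \emph{right} multiplication, that $\Gamma_n$-coinvariants dualize to $\Gamma_n$-fixed points of the dual action, and that passing between right and left multiplication does not change the absolute determinant. The nonvanishing of $\ddet\, \ell (f^{(n)})$ must itself be deduced from expansiveness (equivalently from $f \in L^1 (\Gamma)^{\times}$) rather than assumed, since otherwise $\Fix_{\Gamma_n} (X_f)$ would contain a subtorus and the counting interpretation would collapse.
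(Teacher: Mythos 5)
Your argument is correct, but it reaches Theorem \ref{t6} by a genuinely different route than the one the paper relies on. The paper is a survey and quotes this statement from \cite{DS} Theorem 5.7 without proof; that original proof predates Li's theorem (Theorem \ref{t5}), so it could not have used it: there the link between entropy and periodic points is established by direct dynamical arguments for expansive actions (using expansiveness, amenability and residual finiteness), and $h^{\per}(f) = \log \ddet_{\Nh\Gamma} f$ then follows from the counting identity $|\Fix_{\Gamma_n}(X_f)| = |\ddet\, \ell (f^{(n)})|$ together with the approximation theorem (Theorem \ref{t1}). You instead outsource the entire dynamical side to Li: $h(f) = \log \ddet_{\Nh\Gamma} f$ by Theorem \ref{t5} (legitimate, since a unit of $L^1(\Gamma)$ is a unit of $\Nh\Gamma$, and non-circular, since Li's proof covers amenable groups that are not residually finite and hence cannot pass through periodic points), so that the only thing left to prove is the counting identity. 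Your proof of that identity is sound: $\Fix_{\Gamma_n}(X_f)$ is the Pontrjagin dual of the coinvariants $M_{\Gamma_n} \cong \Z[\Gamma^{(n)}]/\Z[\Gamma^{(n)}]f^{(n)} = \mathrm{coker}\,(r_{f^{(n)}})$; expansiveness makes this finite, which forces $\det r_{f^{(n)}} \neq 0$ and gives $|M_{\Gamma_n}| = |\det r_{f^{(n)}}|$; and under the Wedderburn decomposition $\C[\Gamma^{(n)}] \cong \prod_{\rho} M_{d_\rho}(\C)$ both left and right multiplication by $f^{(n)}$ have determinant $\prod_{\rho} (\det \rho(f^{(n)}))^{d_\rho}$, so in fact $\det \ell(f^{(n)}) = \det r_{f^{(n)}}$ exactly, not merely up to absolute value. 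What your route buys is economy: granting the survey's quoted Theorems \ref{t1} and \ref{t5}, only algebraic bookkeeping remains. What it costs is reliance on the deeper and more general theorem of Li, whereas the original argument of \cite{DS} is self-contained in the expansive residually finite setting and yields as its core the dynamical fact that periodic points capture entropy --- precisely the content that motivates the $p$-adic theory of section \ref{subsec:31}, where no analogue of Theorem \ref{t5} exists and $h^{\per}_p$ must serve as the definition of entropy.
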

 
This result will motivate the discussion of the $p$-adic case later.

For $\Gamma = \Z^d$ Br\"auer gave an interesting $K$-theoretical generalization of theorem \ref{t5}. Let $\Mh_{S_{\infty}} (R_d)$ be the category of finitely generated $R_d$-modules which are $S_{\infty}$-torsion. This is an exact subcategory of the category of all $R_d$-modules and one has a localization sequence \cite{Ba} IX Theorem 6.3 and Corollary 6.4
\[
 R^{\times}_d = K_1 (R_d) \longrightarrow K_1 (R_d [S^{-1}_{\infty}]) \xrightarrow{\delta} K_0 (\Mh_{S_{\infty}} (R_d)) \xrightarrow{\varepsilon} K_0 (R_d) \; .
\]
The map $\delta$ induces an isomorphism
\[
 \overline{\delta} : K_1 (R_d [S^{-1}_{\infty} ] ) / R^{\times}_d \silo K_0 (\Mh_{S_{\infty}} (R_d))
\]
whose inverse will be denoted by $cl_{\infty}$. For example
\[
 cl_{\infty} (R_d / R_d f) = [f] \mod R^{\times}_d \; .
\]
The composition obtained using \eqref{eq:3a}
\[
\ddet_{\Nh\Z^d} : K_1 (R_d [S^{-1}_{\infty}]) \longrightarrow K_1 (\Nh \Z^d) \xrightarrow{\ddet_{\Nh\Z^d}} \R^*_+
\]
is trivial on $R^{\times}_d = \pm \Z^d$. Using Yuzvinskii's addition formula for entropy and calculations of entropy in \cite{Sch}, Br\"auer obtained the following result \cite{Br} Theorem 5.8

\begin{theorem} \label{t6a}
 Consider an expansive algebraic $\Z^d$-action on a compact abelian group $X$. Then we have
\[
 h (X) = \log \ddet_{\Nh \Z^d} (cl_{\infty} (X)) \; .
\]
\end{theorem}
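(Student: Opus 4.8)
The plan is to reduce everything to the one-variable computation of Theorem~\ref{t4} by exploiting that \emph{both} sides of the asserted identity are additive in $X$. First I would dualize: put $M = \hX$, so that $M$ is a countable $R_d$-module whose module structure encodes the $\Z^d$-action. Expansiveness together with Theorem~\ref{t3a} forces $M$ to be finitely generated and $S_{\infty}$-torsion, i.e. $M \in \Mh_{S_{\infty}}(R_d)$, and by definition $cl_{\infty}(X) = \overline{\delta}^{-1}([M]) \in K_1(R_d[S^{-1}_{\infty}])/R^{\times}_d$. It therefore suffices to show that the two functions
\[
 [M] \longmapsto h(\hM) \qquad\text{and}\qquad [M] \longmapsto \log\ddet_{\Nh\Z^d}\big(cl_{\infty}(\hM)\big)
\]
agree on $K_0(\Mh_{S_{\infty}}(R_d))$.

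The next step is to see that both functions descend to \textbf{group homomorphisms} on $K_0(\Mh_{S_{\infty}}(R_d))$. For the right-hand side this is formal: $cl_{\infty}=\overline{\delta}^{-1}$ is an isomorphism of abelian groups by the localization sequence, and $\log\ddet_{\Nh\Z^d}$ is a homomorphism on $K_1(R_d[S^{-1}_{\infty}])$ that kills $R^{\times}_d=\pm\Z^d$, as recorded just before the statement. For the left-hand side the essential input is \textbf{Yuzvinskii's addition formula}: for a short exact sequence $0\to M'\to M\to M''\to 0$ in $\Mh_{S_{\infty}}(R_d)$ the Pontrjagin-dual sequence of compact groups has additive entropy, $h(\hM)=h(\hM')+h(\hM'')$. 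Since $\Mh_{S_{\infty}}(R_d)$ is closed under subobjects, quotients and extensions ($R_d$ being Noetherian), additivity shows $[M]\mapsto h(\hM)$ is well defined on $K_0$ and is a homomorphism to $(\R,+)$.

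Having two homomorphisms out of $K_0$, I would compare them on the cyclic classes. For $f\in S_{\infty}$ the dual of $R_d/R_df$ is exactly the subshift $X_f$, so the left-hand side is $h(X_f)=h(f)=m(\hf)$ by Theorem~\ref{t4}, while the right-hand side is $\log\ddet_{\Nh\Z^d}(f)=m(\hf)$ by $cl_{\infty}(R_d/R_df)=[f]\bmod R^{\times}_d$ together with the Mahler-measure identity~\eqref{eq:6}. Thus the two homomorphisms agree on every class $[R_d/R_df]$.

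The main obstacle is then to promote this agreement from cyclic classes to all of $K_0(\Mh_{S_{\infty}}(R_d))$, equivalently to all of $K_1(R_d[S^{-1}_{\infty}])$. For a unit $u=f/g$ of $R_d[S^{-1}_{\infty}]$ one has $f,g\in S_{\infty}$ (if $ab\in S_{\infty}$ then $a$ is already a unit in $L^1(\Z^d)$, so $a\in S_{\infty}$) and $\delta([u])=[R_d/R_df]-[R_d/R_dg]$, so both homomorphisms return $m(\hf)-m(\widehat{g})$. A general class is represented by $A\in\GL_n(R_d[S^{-1}_{\infty}])$; clearing denominators $A=BC^{-1}$ with $B,C$ matrices over $R_d$ of $S_{\infty}$-torsion cokernel gives $\delta([A])=[\Coker B]-[\Coker C]$, while multiplicativity of the Fuglede--Kadison determinant and commutativity of $\Nh\Z^d=L^{\infty}(T^d)$ give $\log\ddet_{\Nh\Z^d}([A])=\log\ddet_{\Nh\Z^d}(\det A)$. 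The crux is therefore the entropy computation $h(\widehat{\Coker B})=m(\widehat{\det B})$, which is precisely what the entropy calculations in \cite{Sch} provide for modules with a square presentation matrix: one filters $\Coker B$ and applies the addition formula to reduce to the cyclic module $R_d/R_d(\det B)$ (this also shows $[\Coker B]=[R_d/R_d\det B]$ in $K_0$, confirming that cyclic classes generate). With this reduction both homomorphisms coincide on $K_1(R_d[S^{-1}_{\infty}])/R^{\times}_d$, and applying $cl_{\infty}$ yields $h(X)=\log\ddet_{\Nh\Z^d}(cl_{\infty}(X))$ for every expansive $X$.
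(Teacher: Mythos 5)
Your overall architecture is the one the paper itself ascribes to Br\"auer: the paper gives no proof of Theorem \ref{t6a}, saying only that it follows from Yuzvinskii's addition formula and the entropy calculations in \cite{Sch}, and that is exactly what you use. Additivity of entropy in short exact sequences makes $[M]\mapsto h(\hM)$ a homomorphism on $K_0(\Mh_{S_{\infty}}(R_d))$, the map $\log\ddet_{\Nh\Z^d}\circ\, cl_{\infty}$ is a homomorphism for formal reasons, and both are evaluated on a general class via a presentation $\delta([A])=[\Coker B]-[\Coker C]$, using the two numerical identities $h(\widehat{\Coker B})=m(\widehat{\ddet B})$ and $\log\ddet_{\Nh\Z^d}([A])=m(\widehat{\ddet A})$. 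Both identities are true, so the skeleton of your proof is correct and matches the intended one.

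There is, however, one genuinely false claim: the parenthetical assertion that the filtration argument ``also shows $[\Coker B]=[R_d/R_d\ddet B]$ in $K_0$, confirming that cyclic classes generate.'' If this held for every square matrix $B$ over $R_d$ with $S_{\infty}$-torsion cokernel, then $\delta$ would kill $[B]-[\ddet B]$ for every such $B$; since $\Ker\delta$ is the image of $R^{\times}_d$, which meets $SK_1(R_d[S^{-1}_{\infty}])$ trivially, and since every element of $SK_1(R_d[S^{-1}_{\infty}])$ can be written as $[B]-[\ddet B]$ (clear denominators and use the Whitehead lemma on the scalar matrix), this would force $SK_1(R_d[S^{-1}_{\infty}])=0$. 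But the paper stresses, right after Theorem \ref{t6a}, that Br\"auer proved $SK_1(R_d[S^{-1}_{\infty}])\neq 0$ for $d\ge 5$ and that there are expansive actions whose class has nonzero $SK_1$-component; so cyclic classes do \emph{not} generate $K_0(\Mh_{S_{\infty}}(R_d))$ in general. What a prime filtration of $\Coker B$ actually yields is quotients $R_d/q_i$ with $q_i$ prime, some of them non-principal: these have zero entropy but in general nonzero class in $K_0$. The correct statement is purely numerical: $h(\widehat{\Coker B})=\sum_{q_i=(g_i)\ \mathrm{principal}} m(\widehat{g_i})=m(\widehat{\ddet B})$, where the last equality needs that $(\ddet B)$, the zeroth Fitting ideal of $\Coker B$, coincides up to a unit of $R_d$ with the product of the principal primes of the filtration with multiplicities (both are principal ideals of the UFD $R_d$ agreeing at all height-one primes), together with the vanishing of entropy on non-principal prime quotients; all of this is what \cite{Sch} Theorem 18.1 provides. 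Fortunately the false $K_0$-identity is not load-bearing in your argument: you evaluate both homomorphisms directly on $[\Coker B]-[\Coker C]$, and for that the numerical identity suffices. Delete the parenthetical, justify the crux as above, and your proof is complete and agrees with Br\"auer's.
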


On the subgroup $SK_1 (R_d [S^{-1 }_{\infty}])$ the determinant $\ddet_{\Nh\Z^d}$ is trivial \cite{Br} Corollary 5.7. Using work by Dayton and real Bott periodicity, Br\"auer proves that for $d \ge 5$ the group $SK_1 (R_d [S^{-1}_{\infty}])$ is non-zero and that there are expansive algebraic actions of $\Z^d$ such that the $SK_1$-component $f_X$ of $cl_{\infty}$ is non-zero \cite{Br} Theorem 5.35 ff. It would be interesting to know if $f_X$ has a dynamical interpretation. 

I think that a version of theorem \ref{t6a} should hold for general $\Gamma$. For this the ideas in \cite{CL} section 3 should be helpful.
  
\subsection{The relation to the Deligne regulator} \label{subsec:24}
Consider the multiplicative algebraic torus $\G^d_m = \spec \Z [\Z^d]$. For an element $f \in \Z [\Z^d] = \Gamma (\G^d_m , \Oh)$ let $V (f)$ be the zero scheme in $\G^d_m$ and $U$ its complement. Thus $U = \spec \Z [\Z^d]_{f}$. Consider the regulator map from motivic cohomology to Deligne cohomology
\[
 r_{\Dh} : H^{d+1}_{\Mh} (U , \Q (d+1)) \longrightarrow H^{d+1}_{\Dh} (U_{\R} , \R (d+1)) \; .
\]
We may think of $H^{d+1}_{\Mh} (U , \Q (d+1))$ as a subquotient of the algebraic $K$-group of the localization $\Z [\Gamma]_{f}$ of the group ring $\Gamma = \Z^d$. By dimension reasons we have
\[
 H^{d+1}_{\Dh} (U_{\R} , \R (d+1)) = H^d_B (U_{\R} , \R (d)) \; .
\]
 If we assume that $\hf$ does not vanish in any point of $T^d \subset (\C^*)^d$, then $T^d$ with its standard orientation defines a $d$-dimensional homology class $[T^d]$ on $U (\C)$. The element $(2 \pi i)^{-d} [T^d]$ is $\overline{F}_{\infty}$-invariant where $F_{\infty}$ is complex conjugation on $U (\C)$. Hence it defines a class in $H^B_d (U_{\R} , \Z (-d))$. There is a natural pairing
\[
 \langle , \rangle : H^d_B (U_{\R} , \R (d)) \times H^B_{-d} (U_{\R} , \R (-d)) \longrightarrow \R \; .
\]
The standard generators $e_i$ of $\Z^d$ for $i = 1 , \ldots , d$ define elements of $\Z [\Z^d]^{\times} = \Gamma (\G^d_m , \Oh)^{\times}$ and hence of $\Gamma (U , \Oh)^{\times}$. Consider the cup-product
\[
 \{ f , e_1 , \ldots , e_d \} = f \cup e_1 \cup \ldots \cup e_d \in H^{d+1}_{\Mh} (U , \Q (d+1))
\]
of the $d+1$ elements $f , e_1 , \ldots , e_d$ in $H^1_{\Mh} (U , \Q (1)) = \Oh^{\times} (U) \otimes \Q$. 

A basic calculation with $\cup$-products in Deligne cohomology gives the following result \cite{D1} proposition 1.2.

\begin{prop} \label{t7}
 For $f \in \Z [\Z^d]$ with $\hf (z) \neq 0$ for all $z \in T^d$ we have
\begin{equation} \label{eq:12}
 \langle r_{\Dh} \{ f , e_1 , \ldots , e_d \} , (2 \pi i)^{-d} [T^d] \rangle = m (\hf) \; .
\end{equation}
\end{prop}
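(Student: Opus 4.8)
The plan is to compute the real Deligne regulator of the cup product $\{f, e_1, \ldots, e_d\}$ by means of an explicit differential-form representative and then to integrate that form over the cycle $(2\pi i)^{-d}[T^d]$. Recall that for a unit $u \in \Oh^{\times}(U)$ the class $r_{\Dh}(u) \in H^1_{\Dh}(U_{\R}, \R(1))$ is represented by the datum built from $\log|u|$ and $d\log u$, and that the regulator is compatible with cup products, the product being computed by the (asymmetric) Deligne cup product on the Deligne complex. Thus $r_{\Dh}\{f, e_1, \ldots, e_d\}$ is represented, up to exact terms, by a real $d$-form on $U(\C)$ which is an alternating sum of wedge products in which each summand carries exactly one logarithmic factor $\log|\hullet|$ together with $d$ factors of $d\log$ type.

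First I would write down this $d$-form explicitly, of the schematic shape
\[
 \sum_{j} \pm \log|g_j| \bigwedge_{k \neq j} \frac{d\log g_k}{2\pi i} \; ,
\]
with $(g_0 , g_1 , \ldots , g_d) = (f , z_1 , \ldots , z_d)$, the coordinate functions $z_i$ representing the standard generators $e_i$ and $f$ identified with $\hf$ under the Fourier isomorphism $\C [\Z^d] \silo \C [z^{\pm 1}_1 , \ldots , z^{\pm 1}_d]$. Next I would restrict to the real torus $T^d = \{ |z_i| = 1 \}$. On $T^d$ we have $\log |z_i| = 0$ for every $i$, so every summand whose logarithmic factor is some $\log |z_i|$ vanishes, and the only surviving term is the one carrying $\log |\hf|$, namely $\log |\hf| \cdot \bigwedge_i \frac{d\log z_i}{2\pi i}$. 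The hypothesis $\hf (z) \neq 0$ on $T^d$ guarantees $T^d \subset U (\C)$, so that the form is smooth along the cycle and the integral converges.

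Writing $z_i = e^{i\theta_i}$ gives $d\log z_i = i \, d\theta_i$, whence $\frac{d\log z_i}{2\pi i} = \frac{d\theta_i}{2\pi}$ and $\bigwedge_i \frac{d\log z_i}{2\pi i}$ restricts on $T^d$ to the normalized Haar measure $d\mu$. The factor $(2\pi i)^{-d}$ attached to the homology class $[T^d]$ is precisely the twist matching the $\R (d)$-normalization of the target $H^d_B (U_{\R} , \R (d))$ and cancels the remaining powers of $2\pi i$. Evaluating the pairing therefore yields $\int_{T^d} \log |\hf| \, d\mu = m (\hf)$, as asserted.

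The main obstacle I anticipate is purely one of bookkeeping: pinning down the exact Deligne cup-product representative, including all signs and powers of $2\pi i$, and verifying that after restriction to $T^d$ and the $(2\pi i)^{-d}$-twist no spurious constant survives. Conceptually the vanishing of $\log |z_i|$ on $T^d$ collapses the entire alternating sum to a single clean term, so the only genuine content is confirming that the normalization conventions for Deligne cohomology, for the cup product, and for the twisted homology pairing are mutually consistent and reproduce the Mahler measure exactly.
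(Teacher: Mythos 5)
Your proposal is correct and follows essentially the same route as the paper's proof, which is the calculation cited from \cite{D1}, Proposition 1.2: one writes the explicit Beilinson-type cup-product representative in real Deligne cohomology, observes that every term carrying a factor $\log|z_i|$ vanishes on $T^d$ since $|z_i|=1$ there, and the single surviving term $\log|\hf|$ times the normalized Haar form $\bigwedge_i \frac{d\log z_i}{2\pi i}$ integrates to $m(\hf)$. The sign and $2\pi i$ bookkeeping you defer is precisely the routine verification carried out in that reference, so no essential idea is missing.
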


According to the preceeding sections, we can express this statement differently and without using the Fourier transform $\hf$ of $f$:

\begin{cor} \label{t8}
For $f \in \Z [\Z^d]$ with $f \in L^1 (\Z^d)^{\times}$ or equivalently with the $\Z^d$-action on $X_f$ being expansive, we have
\[
 \langle r_{\Dh} \{ f , e_1 , \ldots , e_d \} , (2 \pi i)^{-d} [T^d] \rangle  = h (f) = \log \ddet_{\Nh\Z^d} (f) \; .
\]
\end{cor}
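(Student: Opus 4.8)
The plan is to recognize that this corollary is purely a matter of assembling the three immediately preceding facts, once the various forms of the hypothesis are seen to coincide. First I would record that the three conditions mentioned in the statement are equivalent: by Wiener's theorem \ref{t2}, the element $f$ is a unit in $L^1 (\Z^d)$ if and only if $\hf (z) \neq 0$ for every $z \in T^d$, and by Theorem \ref{t3} specialized to $\Gamma = \Z^d$ (equivalently, by the classical characterization recalled at the start of section \ref{subsec:23}) this in turn is equivalent to the $\Z^d$-action on $X_f$ being expansive. Hence it suffices to prove the chain of equalities under the single non-vanishing hypothesis $\hf \neq 0$ on $T^d$, which is exactly the hypothesis of Proposition \ref{t7}.

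Next I would simply chain the identities from left to right. Since $\hf$ does not vanish on $T^d$, Proposition \ref{t7} gives
\[
 \langle r_{\Dh} \{ f , e_1 , \ldots , e_d \} , (2 \pi i)^{-d} [T^d] \rangle = m (\hf) \; .
\]
Because a unit is in particular non-zero, Theorem \ref{t4} applies and yields $m (\hf) = h (f)$, while equation \eqref{eq:6} identifies $m (\hf)$ with $\log \ddet_{\Nh\Z^d} (f)$. Reading these three equalities in succession produces precisely the asserted double equality.

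As for difficulty: there is no genuine obstacle here, which is exactly why the assertion is phrased as a corollary rather than a theorem. All of the analytic, dynamical and cohomological content has already been absorbed into Theorems \ref{t2} and \ref{t4}, equation \eqref{eq:6}, and Proposition \ref{t7}. The only point deserving any care is the bookkeeping of hypotheses, namely confirming that the properties ``unit in $L^1 (\Z^d)$'', ``$\hf$ non-vanishing on $T^d$'', and ``$X_f$ expansive'' really do single out the same class of elements $f$, so that each cited result may be invoked on a common domain; this is precisely what Wiener's theorem \ref{t2} together with the expansiveness criterion \ref{t3} guarantee.
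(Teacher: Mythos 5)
Your proof is correct and matches the paper's intended derivation exactly: the corollary is nothing more than Proposition \ref{t7} combined with Theorem \ref{t4} and equation \eqref{eq:6}, once Wiener's theorem \ref{t2} and the expansiveness criterion (Theorem \ref{t3} for $\Gamma = \Z^d$) identify the hypotheses ``$f \in L^1(\Z^d)^{\times}$'', ``$\hf$ non-vanishing on $T^d$'', and ``$X_f$ expansive''. Your bookkeeping of which result needs which form of the hypothesis is precisely the content the paper leaves implicit.
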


Neither the relation of the regulator pairing with entropy nor the relation with the determinant on $\Nh \Z^d$ is conceptually understood. It would be interesting to generalize the left hand side to some non-commutative groups $\Gamma$ e.g. polycyclic ones. The starting point could be the algebraic $K$-theory of the Ore localization $\Z \Gamma_{f}$. This would give integrals like \eqref{eq:9} an interpretation as non-commutative regulator values. The expression on the left of equation \eqref{eq:12} may also be interpreted as a Deligne period of a mixed motive, \cite{D1}. Equation \eqref{eq:12} and its refinements embed $m (\hf)$ into the context of the Beilinson conjectures. See \cite{D1}, \cite{La}, \cite{RV}.

There is a trivial analogue of proposition \ref{t7} for the group $(\Z / n)^d$ instead of $\Z^d$. Consider the group scheme $\mu^d_n = \spec \Z [(\Z / n)^d]$. For an element $F \in \Z [(\Z/n)^d] = \Gamma (\mu^d_n , \Oh)$ let $V (F)$ be the zero scheme of $F$ in $\mu^d_n$ and $U_n$ its complement. Again we have a regulator map
\[
 r_{\Dh} : H^1_{\Mh} (U_n, \Q (1)) \longrightarrow H^1_{\Dh} (U_{n\R} , \R (1)) = H^0_B (U_{n\R} , \R) \; .
\]
If $F (\zeta) \neq 0$ for all $\zeta \in \mu^d_n (\C)$ then $U_{n\R} = \mu^d_{n\R}$ and we find
\[
 H^0_B (U_{n\R} , \R) = \Big( \prod_{\zeta \in \mu^d_n (\C)} \R \Big)^+ \; .
\]
Here an element $(\lambda_{\zeta})$ is in the $+$ part if $\lambda_{\ozeta} = \lambda_{\zeta}$ for all $\zeta \in \mu^d_n (\C)$. Using the canonical pairing
\[
 \langle , \rangle : H^0_B (\mu^d_n (\C) , \R) \times H^B_0 (\mu^d_n (\C) , \R) \longrightarrow \R
\]
and formula \eqref{eq:5} we immediately get

\begin{prop} \label{t9}
 For $F \in \Z [(\Z / n)^d]$ with $F (\zeta) \neq 0$ for all $\zeta \in \mu^d_n (\C)$ or equivalently with $F \in \C [(\Z / n)^d]^{\times}$ we have
\begin{align*}
 \langle r_{\Dh} (F) , n^{-d} (1, \ldots , 1) \rangle & = n^{-d} \sum_{\zeta \in \mu^d_n (\C)} \log |F (\zeta)| \\
& = h (F) = \log \ddet_{\Nh (\Z / n)^d} (F) \; .
\end{align*}
\end{prop}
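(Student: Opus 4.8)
The plan is to reduce everything to two elementary facts: that $U_n$ is zero-dimensional, so that the Beilinson regulator of a unit is simply the logarithm of its complex absolute value at each point, and that for the finite group $\Gamma=(\Z/n)^d$ both the entropy and the Fuglede--Kadison determinant are computed by the elementary formulas \eqref{eq:2} and \eqref{eq:5}. Once the regulator is pinned down, all three equalities are bookkeeping.

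First I would describe the geometry. Since $F(\zeta)\neq 0$ for every $\zeta\in\mu^d_n(\C)$, the divisor $V(F)$ is empty over $\C$, so $U_{n}(\C)=\mu^d_n(\C)$ is just the finite set of $n^d$ complex points, with $F_\infty$ acting by $\zeta\mapsto\ozeta$; this gives $H^0_B(U_{n\R},\R)=\big(\prod_\zeta\R\big)^+$ as stated. The class $F\in H^1_{\Mh}(U_n,\Q(1))=\Oh^\times(U_n)\otimes\Q$ is a global unit, and on each geometric point $\Spec\C\hookrightarrow U_n$ indexed by $\zeta$ the degree-one regulator $K_1(\C)\otimes\Q\to H^1_{\Dh}(\Spec\C,\R(1))\cong\R$ is the map $u\mapsto\log|u|$. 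Collecting these over all $\zeta$ identifies $r_{\Dh}(F)$ with the tuple $(\log|F(\zeta)|)_{\zeta\in\mu^d_n(\C)}$, which lies in the $+$-part because $F$ has integer coefficients and hence $F(\ozeta)=\overline{F(\zeta)}$, so $\log|F(\ozeta)|=\log|F(\zeta)|$. The homology class $n^{-d}(1,\dots,1)$ is the averaging $0$-cycle $n^{-d}\sum_\zeta[\zeta]$, and the canonical pairing is evaluation, so
\[
 \langle r_{\Dh}(F),n^{-d}(1,\dots,1)\rangle=n^{-d}\sum_{\zeta\in\mu^d_n(\C)}\log|F(\zeta)|,
\]
which is the first equality.

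For the entropy, I would observe that $M=\Z\Gamma/\Z\Gamma F$ with $\Gamma=(\Z/n)^d$ is finite: left multiplication by $F$ on the free $\Z$-module $\Z\Gamma$ of rank $n^d$ is given by an integer matrix whose determinant is $\prod_\zeta F(\zeta)$, since over $\C$ the operator $\ell(F)$ is diagonal with eigenvalue $F(\zeta)$ on the $\zeta$-isotypic line; this determinant is a nonzero integer because $F$ is a unit in $\C\Gamma$. Hence $X_F=\hat M$ is finite of order $|X_F|=|M|=\prod_\zeta|F(\zeta)|$, and formula \eqref{eq:2} applied to the finite group $\Gamma$ yields $h(F)=|\Gamma|^{-1}\log|X_F|=n^{-d}\sum_\zeta\log|F(\zeta)|$. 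Finally, the determinant equality is exactly formula \eqref{eq:5} for $\Gamma=(\Z/n)^d$: its Pontrjagin dual is $\mu^d_n$, whose Haar probability measure assigns mass $n^{-d}$ to each point, and under the Fourier identification one has $\hat{F}(\zeta)=F(\zeta)$, so that
\[
 \log\ddet_{\Nh(\Z/n)^d}(F)=\int_{\mu^d_n}\log|\hat{F}|\,d\mu=n^{-d}\sum_\zeta\log|F(\zeta)|.
\]
(Alternatively this equality follows from the finite-group formula $\ddet_{\Nh\Gamma}F=|\ddet\,\ell(F)|^{1/|\Gamma|}$, or from Theorem \ref{t5} once the entropy is known.)

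The only genuinely delicate point is the first step: fixing the normalization of the degree-one regulator and the Tate twist so that pairing $r_{\Dh}(F)\in H^1_{\Dh}(U_{n\R},\R(1))$ against the cycle in $H^B_0$ produces $\log|F(\zeta)|$ with no spurious factor of $2\pi i$ or sign. Here, unlike the $\Z^d$ case of Proposition \ref{t7} where the twist $(2\pi i)^{-d}$ was needed, weight and twist coincide (both equal to $1$ on a zero-dimensional base), so this is the cleanest possible instance; once the isomorphism $H^1_{\Dh}(\Spec\C,\R(1))\cong\R$, $u\mapsto\log|u|$, is fixed compatibly with the pairing, the remaining two equalities are immediate from \eqref{eq:2} and \eqref{eq:5}.
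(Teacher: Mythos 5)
Your proposal is correct and follows essentially the same route as the paper, which treats the proposition as an immediate consequence of the zero-dimensional geometry of $U_n$, the identification $H^1_{\Dh}(U_{n\R},\R(1))=H^0_B(U_{n\R},\R)=\bigl(\prod_\zeta\R\bigr)^+$, the evaluation pairing, and formula \eqref{eq:5}. The only difference is that you spell out the details the paper calls ``immediate'' --- in particular the entropy step via $|X_F|=|\ddet\,\ell(F)|=\prod_\zeta|F(\zeta)|$ combined with \eqref{eq:2} --- which is exactly the intended argument.
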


Starting with an element $f \in \Z [\Z^d] = \Gamma (\G^d_m , \Oh)$ we may apply this proposition to $F = f^{(n)}$ which may be viewed as the restriction of the regular function $f$ on $\G^d_m$ to $\mu^d_n \subset \G^d_m$. Thus $U_n = U \cap \mu^d_n \subset U$ and equation \eqref{eq:7} gives the following limit formula if we assume that $\hf$ is non-zero in any point of $T^d$:
\begin{equation} \label{eq:13}
 \langle r_{\Dh} \{ f , e_1 , \ldots , e_d \} , (2 \pi i)^{-d} [T^d] \rangle = \lim_{n\to\infty} \langle r_{\Dh} (f \, |_{\mu^d_n}) , n^{-d} (1, \ldots , 1) \rangle \; .
\end{equation}
It would be interesting to find an approximation property of regulators with respect to dense subschemes generalizing this statement. In this regard the evaluations on $(2 \pi i)^{-d} [T^d]$ and $n^{-d} (1 , \ldots , 1)$ in formula \eqref{eq:13} should be viewed as normalized traces on cohomology.  

\section{The non-archimedian case} \label{sec:3}
\subsection{$p$-adic expansiveness and $p$-adic entropy for algebraic dynamical systems} \label{subsec:31}

Let $p$ be a prime number. In the following sections we will describe a $p$-adic analogue of the previous theory. Only algebraic dynamical systems will be considered because the $p$-adic theory at present works only in such a rigid situation. Thus, as before let $\Gamma$ be a countable discrete group acting on a compact metrizable topological group $X$ by topological automorphisms. The usual definitions of the topological or  measure theoretic entropy do not seem to have a good $p$-adic analogue. However it turns out that a $p$-adic analogue of the logarithmic growth rate of the number of  periodic points exists for a large class of examples. Consider the branch of the $p$-adic logarithm $\log_p : \Q^{\times}_p \to \Z_p$ normalized by $\log_p p = 0$. If $\Gamma$ is residually finite and if the limit
\[ 
 h^{\per}_p = \lim_{n\to \infty} \frac{1}{(\Gamma : \Gamma_n)} \log_p |\Fix_{\Gamma_n} (X)|
\]
exists in $\Q_p$ for all sequences $\Gamma_n \to e$, we will call it the $p$-adic periodic entropy of the $\Gamma$-action, \cite{D3}. This notion turns out to be reasonable for a large class of systems of the form $X = X_f$ with $f \in \Z \Gamma$. We will calculate it in terms of a $p$-adic analogue of the Fuglede--Kadison determinant of $f$. For $\Gamma = \Z^d$ we will also relate it to values of the syntomic regulator map.

We need some objects from $p$-adic functional analysis. The definition of $L^1 (\Gamma)$ has an obvious $p$-adic analogue but due to the ultrametric inequality it is more natural to look at the following bigger algebra which has no archimedian analogue. Let $c_0 (\Gamma)$ be the $\Q_p$-Banach algebra of all formal series $\sum x_{\gamma} \gamma$ with $x_{\gamma} \in \Q_p$ and $|x_{\gamma}|_p \to 0$ for $\gamma \to \infty$, equipped with the norm $\| x \| = \max_{\gamma} |x_{\gamma}|_p$. The elements $x$ with $\| x \| \le 1$ form the $\Z_p$-Banach algebra $c_0 (\Gamma , \Z_p)$ where in addition $x_{\gamma} \in \Z_p$ for all $\gamma$. For $\Gamma = \Z^d$ we write the natural isomorphism with the Tate algebra as
\[
 c_0 (\Z^d) \silo \Q_p \langle z^{\pm 1}_1 , \ldots , z^{\pm 1}_d \rangle \; , \; f \mapsto \hf \; .
\]
Here $z_1 , \ldots , z_d$ are the coordinates on $(\C^{\times}_p)^d$. By $T^d_p$ we denote the $p$-adic $d$-torus of points $z \in (\C^{\times}_p)^d$ with $|z_i|_p = 1$ for all $1 \le i \le d$. The following well known fact from $p$-adic analysis may be viewed as an analogue of Wiener's theorem \ref{t2}:

\begin{prop} \label{t12}
 For $f \in c_0 (\Z^d)$ the following conditions are equivalent
\begin{compactenum}
 \item $\hf (z) \neq 0$ for all $z \in T^d_p$
\item $f$ is a unit in $c_0 (\Z^d)$.
\end{compactenum}
\end{prop}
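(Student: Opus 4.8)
Write (1) and (2) for the two conditions in the order listed. The implication $(2)\Rightarrow(1)$ is immediate: for each $z\in T^d_p$ the series $\sum x_\gamma z^\gamma$ converges in $\C_p$ (as $|x_\gamma|_p\to 0$ and $|z^\gamma|_p=1$), so evaluation $\ev_z\colon c_0(\Z^d)\to\C_p,\ f\mapsto\hf(z)$, is a continuous $\Q_p$-algebra homomorphism; applying it to a relation $fg=1$ gives $\hf(z)\hat g(z)=1$, whence $\hf(z)\neq 0$. So the plan is to concentrate on $(1)\Rightarrow(2)$.

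For that I would first normalize. Multiplying $f$ by a power of $p$ (a unit) I may assume $\|f\|=1$, so $f\in c_0(\Z^d,\Z_p)$ and, since only finitely many coefficients are units, its reduction $\bar f$ is a nonzero Laurent polynomial in $\F_p[\Z^d]=\F_p[z_1^{\pm1},\ldots,z_d^{\pm1}]$. The proof then rests on the dichotomy that $f$ is a unit of $c_0(\Z^d)$ if and only if $\bar f$ is a unit of $\F_p[\Z^d]$, i.e. a nonzero monomial. One direction is the Neumann series: if $\bar f=c\,z^{\gamma_0}$ with $c\in\F_p^\times$, then $f=a_{\gamma_0}\,\gamma_0\,(1+g)$ with $|a_{\gamma_0}|_p=1$ and $\|g\|<1$, and $1+g$ is invertible in the Banach algebra $c_0(\Z^d)$ via $\sum_{k\ge0}(-g)^k$, while $a_{\gamma_0}$ and the group element $\gamma_0$ are units. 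For the converse one uses that $\F_p[\Z^d]$ is a domain: if $f$ is a unit then $\|f^{-1}\|=1$ (otherwise $p^k f^{-1}$ would have norm $1$ yet reduce to a nonzero element killed by $\bar f$, forcing $\bar f=0$), and reducing $ff^{-1}=1$ exhibits $\bar f$ as a unit.

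Granting this dichotomy, $(1)\Rightarrow(2)$ becomes the contrapositive statement: if $\bar f$ is \emph{not} a monomial then $\hf$ has a zero on $T^d_p$. Since $\oF_p$ is algebraically closed, a non-monomial $\bar f$ is a non-unit and hence vanishes at some $\bar\zeta\in(\oF_p^\times)^d$. The step I expect to be the main obstacle is promoting this zero of the reduction to an honest zero $\zeta\in T^d_p$ of $\hf$: lifting $\bar\zeta$ to a Teichm\"uller point $\tau$ only yields $|\hf(\tau)|_p<1$, not a zero. To force a genuine zero I would apply a monomial substitution $z\mapsto z^A$ with $A\in\GL_d(\Z)$ — an automorphism of $c_0(\Z^d)$ preserving the norm, the torus $T^d_p$, and unit-ness — to place $\bar f$ in general position with respect to $z_d$, and then invoke $p$-adic Weierstrass preparation: specializing $z_1,\ldots,z_{d-1}$ near the lift of $\bar\zeta$, the resulting one-variable restricted Laurent series has a Newton polygon which, precisely because $\bar f$ is not a monomial, acquires a horizontal edge, and such an edge produces a root of absolute value $1$.

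The clean way to package this obstacle is to recognize $c_0(\Z^d)=\Q_p\langle z_1^{\pm1},\ldots,z_d^{\pm1}\rangle$ as the affinoid algebra of the rigid-analytic torus, presented as the Laurent domain $\Q_p\langle z_1,w_1,\ldots,z_d,w_d\rangle/(z_iw_i-1)$ in the closed polydisc. Its maximal spectrum has $\C_p$-points exactly the $\zeta$ with $|\zeta_i|_p=1$ (both $z_i$ and $w_i=z_i^{-1}$ being power-bounded), that is $T^d_p$. A non-unit $f$ lies in some maximal ideal $\emm$; by Tate's Nullstellensatz the residue field $c_0(\Z^d)/\emm$ is finite over $\Q_p$, embeds into $\C_p$, and yields $\zeta\in T^d_p$ with $\hf(\zeta)=0$, contradicting (1). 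This route absorbs the lifting difficulty into the affinoid Nullstellensatz, which is then the single nontrivial input.
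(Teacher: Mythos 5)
Your proposal is correct, but there is nothing in the paper to measure it against: Proposition \ref{t12} is quoted there as a ``well known fact from $p$-adic analysis'' (an analogue of Wiener's theorem \ref{t2}) and no proof is given, so a complete argument is genuinely useful. Your $(2)\Rightarrow(1)$ via the evaluation homomorphisms $\ev_z$ is the standard one. Your central dichotomy --- for $\|f\|=1$, $f\in c_0(\Z^d)^{\times}$ if and only if the reduction $\overline{f}$ is a unit, i.e.\ a monomial, in $\F_p[\Z^d]$ --- is precisely the criterion the paper asserts without proof right after the proposition (in the discussion surrounding the sequence \eqref{eq:14}); your verification of it is sound, resting only on the Neumann series, submultiplicativity of the Gauss norm, the fact that $\F_p[\Z^d]$ is a domain, and the triviality of units in Laurent polynomial rings over a field. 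For the remaining step --- a non-unit must vanish somewhere on $T^d_p$ --- the affinoid argument you settle on is complete and correct: $c_0(\Z^d)$ is the affinoid algebra of the rigid torus via the Laurent-domain presentation, a non-unit lies in some maximal ideal $\mathfrak{m}$, Tate's Nullstellensatz makes $c_0(\Z^d)/\mathfrak{m}$ finite over $\Q_p$ and hence embeddable in $\C_p$, power-boundedness of $z_i$ and $z_i^{-1}$ forces $|\zeta_i|_p=1$, and automatic continuity of affinoid homomorphisms identifies the quotient map with $\ev_{\zeta}$, giving $\hf(\zeta)=0$ and the contradiction with (1).

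One caveat on your first, Newton-polygon sketch (which you correctly flag as the delicate point and then bypass): specializing $z_1,\ldots,z_{d-1}$ near a lift of a zero $\overline{\zeta}$ of $\overline{f}$ is not the right choice. The one-variable Newton polygon acquires a horizontal edge only if at least two coefficients of the specialized series have valuation $0$, so after the $\mathrm{GL}_d(\Z)$ substitution you must specialize at a point of $(\overline{\F}_p^{\times})^{d-1}$ where the extreme coefficients $\overline{c}_m$ and $\overline{c}_M$ of $\overline{f}$ as a Laurent polynomial in $z_d$ are both nonzero; a zero of $\overline{f}$ may well kill them, leaving the specialized reduction a monomial and the specialized series a unit. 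With that repair the elementary route also goes through, but since your operative proof is the affinoid one, the proposal stands as written.
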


Note that the reduction $\mod p$ of $c_0 (\Gamma , \Z_p)$ is $\F_p [\Gamma]$. Moreover an element $f$ of $c_0 (\Gamma, \Z_p)$ is a unit if and only if its reduction is a unit in $\F_p [\Gamma]$. We have an exact sequence of groups
\begin{equation} \label{eq:14}
 1 \longrightarrow U^1 \longrightarrow c_0 (\Gamma , \Z_p)^{\times} \longrightarrow \F_p [\Gamma]^{\times} \longrightarrow 1
\end{equation}
where $U^1 = 1 + p c_0 (\Gamma , \Z_p)$ is the subgroup of $1$-units in $c_0 (\Gamma , \Z_p)$. Note that for any homomorphism of groups $\varphi : \Gamma \to \Gamma'$ there is an induced continuous algebra homomorphism
\[
 \varphi_* : c_0 (\Gamma) \longrightarrow c_0 (\Gamma') \quad \text{where} \quad \varphi_* ({\textstyle\sum} a_{\gamma} \gamma) = {\textstyle\sum} a_{\gamma} \varphi (\gamma) \; .
\]
I conjecture that in $c_0 (\Gamma)$ the relation $fg = 1$ implies that $gf = 1$. This is true for residually finite groups $\Gamma$ because of the natural injection
\[
 c_0 (\Gamma) \hookrightarrow \prod^{\infty}_{n=1} c_0 (\Gamma / \Gamma_n)
\]
into a product of finite dimensional $\Q_p$-algebras. Here the $\Gamma_n$'s are cofinite normal subgroups with $\bigcap^{\infty}_{n=1} \Gamma_n = \{ e \}$. 

An abelian group $X$ is said to have bounded $p$-torsion if there is some integer $i_0 \ge 0$ with
\[
 \Ker (p^i : X \to X) = \Ker (p^{i_0} : X \to X) \quad \text{for all} \; i \ge i_0 \; .
\]
We have the following result:

\begin{theorem} \label{t14}
 For $f \in \Z \Gamma$ the following conditions are equivalent\\
a) The group $X_f$ has bounded $p$-torsion\\
b) There is an element $g \in c_0 (\Gamma)$ with $gf = 1$\\
c) $c_0 (\Gamma) \otimes_{\Z\Gamma} M_f = 0$ where $M_f = \hX_f = \Z \Gamma / \Z \Gamma f$.

In this case $\Fix_N (X_f)$ is finite for any cofinite normal subgroup $N \vartriangleleft \Gamma$.

If $\Gamma$ is residually finite, condition a)--c) are equivalent to\\
d) The element $f$ is a unit in $c_0 (\Gamma)$.
\end{theorem}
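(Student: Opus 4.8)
The plan is to dispose of the cheap equivalence first and then build a dictionary between the topological condition a) and congruences on $M_f$ modulo powers of $p$. Unwinding the tensor product gives $c_0(\Gamma)\otimes_{\Z\Gamma}M_f=c_0(\Gamma)/c_0(\Gamma)f$, so c) says exactly that right multiplication by $f$ is surjective on $c_0(\Gamma)$, which (taking a preimage of $1$) is exactly the existence of a left inverse $g$ in b); thus b) $\Leftrightarrow$ c) is a tautology. For the rest I would record the Pontryagin dual description of the torsion: a character $\phi\colon M_f\to\R/\Z$ has $p$-power order iff it factors through some $\tfrac{1}{p^M}\Z/\Z\cong\Z/p^M$, so $X_f[p^\infty]=\bigcup_M\Hom(M_f,\Z/p^M)$. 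Since $\Z/p^M$ is a cogenerator for $\Z/p^M$-modules, $p^{i_0}$ kills $\Hom(M_f,\Z/p^M)$ iff $p^{i_0}M_f\subseteq p^MM_f$. Hence a) is equivalent to the purely algebraic statement that there is an $i_0$ with $p^{i_0}M_f\subseteq p^MM_f$ for all $M$.

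With this dictionary the two implications are short. For b) $\Rightarrow$ a) I would clear denominators to get $hf=p^k$ with $h\in c_0(\Gamma,\Z_p)$, approximate $h$ by $h_0\in\Z\Gamma$ with $h\equiv h_0$ modulo $p^Mc_0(\Gamma,\Z_p)$, and use $\Z\Gamma\cap p^Mc_0(\Gamma,\Z_p)=p^M\Z\Gamma$; since $h_0f\in\Z\Gamma f$ maps to $0$ in $M_f$ while $h_0f\equiv p^k$, one obtains $p^k\,\overline{1}\in p^MM_f$, and as $M_f$ is cyclic this yields $p^kM_f\subseteq p^MM_f$ for every $M$, i.e. a). For a) $\Rightarrow$ b) I would use the inclusion at $M=i_0+1$: writing $p^{i_0}\,\overline{1}=p^{i_0+1}\overline{v}$ in $M_f$ and lifting gives an exact identity $p^{i_0}(1-pv)=uf$ in $\Z\Gamma$. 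The element $1-pv$ is a $1$-unit, hence invertible in $c_0(\Gamma,\Z_p)$ by \eqref{eq:14}, and here is the one delicate point: $\Gamma$ is non-commutative, so a priori the inverted unit sits on the wrong side of $f$; it is saved by the fact that the scalar $p^{i_0}$ is central, so $p^{i_0}(1-pv)=(1-pv)p^{i_0}$ and $g:=p^{-i_0}(1-pv)^{-1}u$ satisfies $gf=1$.

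The finiteness of $\Fix_N(X_f)$ and the residually finite statement d) I expect to be routine once b) is available. Dualizing the $N$-action identifies $\Fix_N(X_f)$ with the Pontryagin dual of the coinvariants $(M_f)_N=\Z[\Gamma/N]/\Z[\Gamma/N]\overline{f}$; pushing $hf=p^k$ forward along $c_0(\Gamma)\to c_0(\Gamma/N)=\Q_p[\Gamma/N]$ makes $\overline{f}$ left-invertible in this finite-dimensional algebra, hence a unit, so $(M_f)_N\otimes\Q_p=0$, which for a finitely generated abelian group forces $(M_f)_N$ finite and therefore $\Fix_N(X_f)$ finite. For d), the implication d) $\Rightarrow$ b) is trivial, and b) $\Rightarrow$ d) follows from the injection $c_0(\Gamma)\hookrightarrow\prod_n c_0(\Gamma/\Gamma_n)$ into finite-dimensional $\Q_p$-algebras: a one-sided inverse is two-sided in each finite-dimensional factor, so $gf=1$ forces $fg=1$ componentwise and hence in $c_0(\Gamma)$.

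I expect the genuine content to be concentrated in the dictionary of the first paragraph and in the repositioning step of a) $\Rightarrow$ b): everything hinges on reading the bounded $p$-torsion of the compact group $X_f$ as the stabilization of the tower $M_f/p^MM_f$, and on upgrading an approximate identity modulo $p^{i_0+1}$ to an exact equation $gf=1$, which works only because $1$-units are invertible in the ultrametric algebra $c_0(\Gamma,\Z_p)$ and because $p$-powers are central.
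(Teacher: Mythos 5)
Your proof is correct and follows essentially the same route as the paper's: the same duality dictionary (bounded $p$-torsion of $X_f$ $\Leftrightarrow$ stabilization of the chain $p^i M_f$), the same approximation of the left inverse by elements of $\Z\Gamma$ modulo $p^M$ for b) $\Rightarrow$ a), the same $1$-unit inversion upgrading $p^{i_0}(1-pv)=uf$ to $gf=1$ for a) $\Rightarrow$ b), and the same pushforward to $\Q_p[\Gamma/N]$, resp.\ injection into $\prod_n c_0(\Gamma/\Gamma_n)$, for the finiteness of $\Fix_N(X_f)$, resp.\ for d). The only cosmetic difference is that the paper inverts $u=q_{i_1}f=p^{i_0}(1-ph_{i_1})$ as a whole and sets $g=u^{-1}q_{i_1}$, whereas you invert $1-pv$ and move the central power of $p$ across; this is the same computation.
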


For obvious reasons we will call the $\Gamma$-action on $X_f$ $p$-adically expansive if either condition a)--c) is satsified. Note that $M_f$ is a finitely generated $\Z\Gamma$-module. Condition a) was introduced by Br\"auer \cite{Br} for algebraic $\Gamma = \Z^d$-actions. 

\begin{proof}
 The isomorphism
\[
 c_0 (\Gamma) \otimes_{\Z\Gamma} M_f = c_0 (\Gamma) / c_0 (\Gamma) f 
\]
shows that b) and c) are equivalent. For a $\Z \Gamma$-module $M$ the Pontrjagin dual $X = \hM$ has bounded $p$-torsion if and only if the sequence
\[
 M \supset pM \supset p^2 M \supset \ldots 
\]
becomes stationary i.e. for some $i_0 \ge 0$ we have $p^{i_0} M = p^i M$ for $i \ge i_0$. Thus condition a) says that
\[
 p^{i_0} \Z \Gamma + \Z \Gamma f = p^i \Z \Gamma ´+ \Z \Gamma f \quad \text{for all} \; i \ge i_0 \; .
\]
Equivalently for $i \ge i_0$ there are elements $h_i , q_i \in \Z \Gamma$ with
\begin{equation} \label{eq:16}
 p^{i_0} = p^i h_i + q_i f \quad \text{for} \; i \ge i_0 \; .
\end{equation}
Setting $i_1 = i_0 + 1$ we get
\[
 q_{i_1} f = p^{i_0} (1 - p h_{i_1}) \; .
\]
The element $1 - ph_{i_1}$ is a $1$-unit in $c_0 (\Gamma , \Z_p)$ and hence $u = q_{i_1} f$ is in $c_0 (\Gamma)^{\times}$. Hence $g = u^{-1} q_{i_1} \in c_0 (\Gamma)$ satisfies $gf = 1$ and hence b) holds.

Conversely assume that we have $gf = 1$ for some $g \in c_0 (\Gamma)$. There is an integer $i_0 \ge 0$ with $q = p^{i_0} g \in c_0 (\Gamma , \Z_p)$. For each $i \ge i_0$ we may write $q$ in the form $q = p^i s_i + q_i$ where $s_i \in c_0 (\Gamma , \Z_p)$ and $q_i \in \Z \Gamma$. Thus, setting $h_i = s_i f$ we get
\[
 p^{i_0} = qf = p^i h_i + q_i f \; .
\]
Since $q_if$ is in $\Z \Gamma$ the element $h_i$ is in $c_0 (\Gamma , \Z_p) \cap p^{-i} \Z \Gamma = \Z \Gamma$ as well. Thus equation \eqref{eq:16} holds for all $i \ge i_0$ and a) is satisfied.

The Pontrjagin dual of $\Fix_N (X_f)$ is the $N$-cofix module of $\Z \Gamma / \Z \Gamma f$ i.e. the module $\Z [\oGamma] / \Z \oGamma \bar{f}$ where $\oGamma = \Gamma / N$ and $\of$ is the image of $f$ in $\Z\oGamma$. The relation $gf = 1$ in $c_0 (\Gamma)$ implies the relation $\og \of = 1$ in $c_0 (\oGamma) = \Q_p [\oGamma]$. \\
This implies that $(\Z \oGamma / \Z \oGamma \bar{f}) \otimes_{\Z} \Q_p = 0$. Hence the finitely generated abelian group $\Z \oGamma / \Z \oGamma \bar{f}$ is torsion and therefore finite. Thus $\Fix_N (X_f)$ is finite as well. Finally, it was mentioned above that condition b) is equivalent to d) if $\Gamma$ is residually finite.
\end{proof}

For $\Gamma = \Z^d$-actions, Br\"auer \cite{Br} Propositions 4.19 and 4.22 has shown the following result similar to theorem \ref{t3a}:

\begin{theorem} \label{t15}
 Consider an algebraic $\Z^d$-action on a compact abelian group $X$ for which the $R_d$-module $M = \hX$ is finitely generated. Then the following conditions are equivalent:\\
a) The group $X$ has bounded $p$-torsion\\
b) For every associated prime ideal $\ep$ of the $R_d$-module $M$ the zero variety $V_p (\ep)$ of $\ep$ in $(\C^{\times}_p)^d$ has empty intersection with $T^d_p$.\\
c) The module $M$ is $S_p$-torsion, where $S_p \subset R_d$ is the multiplicative system $S_p = R_d \cap c_0 (\Z^d)^{\times}$.
\end{theorem}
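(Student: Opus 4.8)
The plan is to establish the two equivalences (a)$\Leftrightarrow$(c) and (b)$\Leftrightarrow$(c); together these give the theorem. The whole argument mirrors the archimedian Theorem \ref{t3a}, with $T^d_p$, the Tate algebra $c_0(\Z^d)$ and the $p$-adic Wiener Proposition \ref{t12} playing the roles of $T^d$, $L^1(\Z^d)$ and Theorem \ref{t2}. Since $M$ is finitely generated over the Noetherian ring $R_d$, I would first reformulate (c) in terms of primes: $M$ is $S_p$-torsion iff a single element of $S_p$ annihilates $M$, iff $M[S_p^{-1}]=0$, iff every prime in $\mathrm{Supp}(M)=V(\mathrm{Ann}(M))$ meets $S_p$, which (as support primes contain associated primes and conversely each associated prime lies in the support) is equivalent to saying that every associated prime $\ep$ of $M$ satisfies $\ep\cap S_p\neq\emptyset$.

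Thus (b)$\Leftrightarrow$(c) reduces to proving, for an arbitrary prime $\ep\subset R_d$,
\[
 \ep\cap S_p\neq\emptyset\quad\Longleftrightarrow\quad V_p(\ep)\cap T^d_p=\emptyset\;.
\]
The direction ``$\Rightarrow$'' is immediate: any $s\in\ep\cap S_p$ vanishes on $V_p(\ep)$ but, being a unit in $c_0(\Z^d)$, is nonvanishing on $T^d_p$ by Proposition \ref{t12}, so the two sets are disjoint. (If $\ep$ contains a nonzero integer, ``$\Leftarrow$'' is also immediate, since such an integer is a unit in the $\Q_p$-algebra $c_0(\Z^d)$.) For ``$\Leftarrow$'' in general, I would invoke the Nullstellensatz for Tate algebras: the maximal ideals of $c_0(\Z^d)/\ep\,c_0(\Z^d)$ correspond to the points of $V_p(\ep)\cap T^d_p$, so emptiness forces $c_0(\Z^d)/\ep\,c_0(\Z^d)=0$, i.e.\ $\ep\,c_0(\Z^d)=c_0(\Z^d)$.

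The crux, and the step I expect to be the main obstacle, is to convert this unit-ideal statement into a \emph{single} element of $\ep$ that is a unit in $c_0(\Z^d)$. The clean route is to work integrally. Writing $c_0(\Z^d)=c_0(\Z^d,\Z_p)[1/p]$ and clearing denominators in a representation $1=\sum a_if_i$ (with $a_i\in c_0(\Z^d)$, $f_i\in\ep$), I obtain $p^k=\sum c_if_i$ with $c_i\in c_0(\Z^d,\Z_p)$ for some $k\ge0$. Since $\Z$ is dense in $\Z_p$, the integral group ring $R_d$ is dense in $c_0(\Z^d,\Z_p)$, so I may choose $c_i'\in R_d$ with $s:=\sum c_i'f_i=p^k+e$ and $\|e\|<|p^k|_p$. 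Then $s\in\ep$ has integer coefficients, and $s=p^k(1+p^{-k}e)$ is a product of the unit $p^k$ with a $1$-unit of $c_0(\Z^d)$, whence $s\in\ep\cap S_p$. The point is that $p$-adically there is no positivity substitute for the Hermitian trick $\sum f_i(z)f_i(z^{-1})$ available over $\R$; the reduction to $p^k\in\ep\,c_0(\Z^d,\Z_p)$ followed by integral approximation is what makes the descent from $c_0$-coefficients to $\Z$-coefficients work.

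Finally (a)$\Leftrightarrow$(c) is purely algebraic. For (a)$\Rightarrow$(c): condition (a) gives $p^{i_0}M=p^{i_0+1}M$, so $N:=p^{i_0}M$ satisfies $N=pN$, and Nakayama (the determinant trick) produces $x\in 1+pR_d$ with $xN=0$; being a $1$-unit, $x$ lies in $S_p$ by \eqref{eq:14}, so $s=xp^{i_0}\in S_p$ annihilates $M$. For (c)$\Rightarrow$(a): given $s\in S_p$ with $sM=0$, write $s=p^as_0$ with $s_0\in R_d$ of $p$-content zero; because $c_0(\Z^d,\Z_p)/p=\F_p[\Z^d]$ is a domain, invertibility of $s_0$ in $c_0(\Z^d)$ forces $s_0\in c_0(\Z^d,\Z_p)^{\times}$, so its reduction is a monomial $cz^{\alpha}$. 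Hence $z^{-\alpha}s_0=c'+p\sigma$ with $p\nmid c'$, and $p^a(c'+p\sigma)M=0$ yields $p^ac'M\subseteq p^{a+1}M$; localizing at $p$, where $c'$ becomes a unit, gives $p^aM_{(p)}=p^{a+1}M_{(p)}$, which descends to (a) because $M/p^iM$ is a $\Z/p^i$-module and is therefore unchanged under $\otimes_{\Z}\Z_{(p)}$.
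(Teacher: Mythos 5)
Your proof is correct, but there is nothing in the paper to compare it against directly: the paper does not prove Theorem \ref{t15}, it quotes the result from Br\"auer's dissertation (\cite{Br}, Propositions 4.19 and 4.22). Measured against what the paper does contain, your argument is a self-contained alternative that runs parallel to, and generalizes, the paper's proof of Theorem \ref{t14} (the cyclic case $M_f = \Z\Gamma/\Z\Gamma f$). The steps all check out: the reduction of (c) to ``every associated prime meets $S_p$'' is standard commutative algebra for finitely generated modules over the Noetherian ring $R_d$; the direction $\ep \cap S_p \neq \emptyset \Rightarrow V_p(\ep)\cap T^d_p=\emptyset$ is exactly Proposition \ref{t12}; and the converse correctly combines the affinoid Nullstellensatz (a nonzero quotient of the Laurent Tate algebra has a maximal ideal with residue field finite over $\Q_p$, whose coordinates have absolute value $1$, yielding a point of $V_p(\ep)\cap T^d_p$) with your density-plus-$1$-unit trick. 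That trick is the real content of the proof, and it is the same mechanism as equation \eqref{eq:16} in the paper's proof of Theorem \ref{t14}: there $q_{i_1}f = p^{i_0}(1-ph_{i_1})$ plays the role of your $s = p^k(1+p^{-k}e)$, with $1$-units invertible in $c_0(\Z^d,\Z_p)$ in both cases. For (a)$\Leftrightarrow$(c) you implicitly use the duality between bounded $p$-torsion of $X$ and stationarity of the chain $(p^iM)$ -- a fact stated in the paper's proof of Theorem \ref{t14} -- and then Nakayama's determinant trick for (a)$\Rightarrow$(c) (your $x\in 1+pR_d$ lies in $S_p$ by \eqref{eq:14}) and the $p$-content argument for (c)$\Rightarrow$(a) (using that $\F_p[\Z^d]$ is a domain, so $c_0(\Z^d)^{\times}=p^{\Z}c_0(\Z^d,\Z_p)^{\times}$, a fact recorded in section \ref{subsec:32}) are both sound. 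What your route buys is independence from \cite{Br} and from the archimedian Theorem \ref{t3a}, at the cost of invoking Tate's theory of affinoid algebras for the Nullstellensatz step; it also makes transparent that conditions b) and c) are equivalent prime by prime, not merely for associated primes of a given module.
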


Note that $M$ then has to be an $R_d$-torsion module.

A $\Z^d$-algebraic system will be called $p$-adically expansive if either condition in the theorem is satisfied. This is compatible with the case $X_f$ above.

\begin{remark} \label{t16}
 \rm For general groups $\Gamma$ the $p$-adically expansive algebraic dynamical systems $X$ should be the ones with $M = \hX$ finitely generated as a $\Z\Gamma$-module and $c_0 (\Gamma) \otimes_{\Z\Gamma} M = 0$. However we want a non-functional analytic description of this property. I have not investigated whether bounded $p$-torsion of $X$ is the right condition also in general.
\end{remark}

We end the discussion of $p$-adic expansiveness with the following fact:

\begin{prop} \label{t17}
 If $X$ has bounded $p$-torsion and $M = \hX$ is a Noetherian $\Z\Gamma$-module, then $M$ is a torsion $\Z\Gamma$-module.
\end{prop}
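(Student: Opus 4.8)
The plan is to translate the bounded $p$-torsion hypothesis into an algebraic statement about $M$ and then extract an annihilator by a Cayley--Hamilton argument. First I would invoke the observation made in the proof of Theorem \ref{t14}: the dual $X = \hM$ has bounded $p$-torsion precisely when the descending chain $M \supseteq pM \supseteq p^2 M \supseteq \cdots$ becomes stationary, say $p^{i_0} M = p^i M$ for all $i \ge i_0$. Setting $N := p^{i_0} M$, this reads $pN = N$. Since $M$ is Noetherian, the submodule $N$ is finitely generated, and this is the only place the Noetherian hypothesis is used. The quotient $M / N$ is annihilated by $p^{i_0}$, hence is torsion (killed by the nonzero central element $p^{i_0} \in \Z \subseteq \Z\Gamma$). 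As $M$ is an extension of $M/N$ by $N$, it will be torsion once $N$ is, so it suffices to prove that a finitely generated $N$ with $pN = N$ is a torsion $\Z\Gamma$-module.

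For this, choose generators $n_1, \ldots, n_k$ of $N$. From $N = pN$ we may write $n_i = p \sum_j a_{ij} n_j$ with $a_{ij} \in \Z\Gamma$, that is $(I_k - pA)\mathbf{n} = 0$ for the column $\mathbf{n} = (n_i)$ and $A = (a_{ij}) \in M_k(\Z\Gamma)$. When $\Z\Gamma$ is commutative (in particular for abelian $\Gamma$, hence for the case $\Gamma = \Z^d$ compared with Theorem \ref{t15}) I would multiply by the adjugate matrix to obtain $D\, n_i = 0$ for all $i$, where $D = \det(I_k - pA) \in \Z\Gamma$. Reducing modulo $p$ gives $D \equiv \det I_k = 1$, so the coefficient of $e$ in $D$ is a unit mod $p$ and in particular $D \neq 0$; thus $DN = 0$ and $N$ is torsion. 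Combining with the $p^{i_0}$-annihilation of $M/N$ (and using that $\Z\Gamma$ is a domain, so that products of nonzero annihilators stay nonzero) yields that every element of $M$ has a nonzero annihilator.

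The main obstacle is the non-commutative case, where there is no multiplicative determinant and $\Z\Gamma$ may contain zero divisors, so both the extraction of the annihilator $D$ and the very meaning of ``torsion'' require care. Here I would instead exploit that $I_k - pA$ has entries in $\Z\Gamma \subseteq c_0(\Gamma, \Z_p)$ and is congruent to $I_k$ modulo $p$, hence is invertible over the $p$-adic Banach matrix algebra $M_k(c_0(\Gamma, \Z_p))$ via the geometric series $\sum_{m \ge 0} (pA)^m$. This gives $c_0(\Gamma) \otimes_{\Z\Gamma} N = 0$, and since $p$ is a unit in $c_0(\Gamma)$ we also have $c_0(\Gamma) \otimes_{\Z\Gamma} (M/N) = 0$, so that $c_0(\Gamma) \otimes_{\Z\Gamma} M = 0$; this is exactly the functional-analytic torsion condition proposed for general $\Gamma$ in Remark \ref{t16} and matches condition c) of Theorem \ref{t14}.

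In summary, the rigorous core is the reduction to $pN = N$ together with the Cayley--Hamilton determinant, which settles all $\Gamma$ with $\Z\Gamma$ a commutative domain; the delicate point for arbitrary $\Gamma$ is to pin down the correct notion of ``torsion $\Z\Gamma$-module'' and to produce an integral witness for it, for which the invertibility of $I_k - pA$ over $c_0(\Gamma, \Z_p)$ is the natural substitute for the determinant.
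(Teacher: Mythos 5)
Your argument is complete only in the case where $\Z\Gamma$ is commutative, while Proposition \ref{t17} is stated for an arbitrary countable discrete group $\Gamma$. In the commutative case your route --- pass to $N = p^{i_0}M$ with $pN = N$, use Noetherianness to make $N$ finitely generated, and kill $N$ by $D = \det(I_k - pA) \equiv 1 \bmod p$ --- is correct, and it is a genuinely different argument from the paper's; in fact the appeal to $\Z\Gamma$ being a domain is unnecessary, since $D \neq 0$ together with $\Z\Gamma$ being torsion-free as an abelian group already gives $Dp^{i_0} \neq 0$, so your argument covers all abelian $\Gamma$, not just $\Z^d$. The genuine gap is the noncommutative case: there you replace the conclusion ``$M$ is a torsion $\Z\Gamma$-module'' by ``$c_0(\Gamma) \otimes_{\Z\Gamma} M = 0$''. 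That is a different statement --- it is exactly the $p$-adic expansiveness condition c) of Theorem \ref{t14} and of Remark \ref{t16}, not algebraic torsion --- and no implication from it back to the existence of nonzero annihilators is established (nor is one readily available: $c_0(\Gamma)$ is not known to be flat over $\Z\Gamma$, so an embedding $\Z\Gamma \cong \Z\Gamma m_0 \hookrightarrow M$ coming from a non-torsion element need not survive the tensor product). So for general $\Gamma$ your proposal proves a true and relevant statement, but not the stated one; you acknowledge this yourself in your final paragraph.

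The missing idea is that no determinant, and no passage to $c_0(\Gamma)$, is needed at all: one can take ``torsion'' in the naive sense that every element has a nonzero annihilator and prove this for every $\Gamma$ directly from the ascending chain condition. Suppose $m_0 \in M$ has zero annihilator. Since $p^{i_0}M = p^{2i_0}M$ (with $i_0 \ge 1$ without loss of generality), choose $m_\nu$ with $p^{2i_0} m_\nu = p^{i_0} m_{\nu-1}$ and set $n_\nu = p^{i_0} m_\nu$, so that $p^{i_0} n_\nu = n_{\nu-1}$. Because $p$ is central and $\Z\Gamma$ is $\Z$-torsion-free, each $n_\nu$ again has zero annihilator, so each cyclic module $N_\nu = \Z\Gamma n_\nu$ is free of rank one; moreover $N_{\nu-1} = N_\nu$ would force $(1 - b p^{i_0}) n_\nu = 0$ for some $b \in \Z\Gamma$, hence $b p^{i_0} = 1$ in $\Z\Gamma$, which is impossible modulo $p$. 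Thus $N_0 \subsetneq N_1 \subsetneq \cdots$ is a strictly increasing chain of submodules of $M$, contradicting the Noetherian hypothesis. This is the paper's proof; it is shorter than the Cayley--Hamilton route, uniform in $\Gamma$, and it simultaneously settles the question you raise about what ``torsion $\Z\Gamma$-module'' should mean.
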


\begin{proof}
 By assumption there is some $i_0 \ge 0$ with $p^{i_0} M = p^i M$ for all $i \ge i_0$. Assume there is a non-torsion element $m_0 \in M$. Choose elements $m_{\nu} \in M$ with $p^{2 i_0} m_{\nu} = p^{i_0} m_{\nu-1}$ for $\nu \ge 1$ and set $n_{\nu} = p^{i_0} m_{\nu}$. Thus we have $p^{i_0} n_{\nu} = n_{\nu-1}$ for $\nu \ge 1$. The submodules $N_{\nu} = \Z \Gamma n_{\nu}$ of $M$ are isomorphic to $\Z \Gamma$ and form a strictly increasing sequence contrary to $M$ being Noetherian by assumption.
\end{proof}

\subsection{$p$-adic determinants and their relation to $p$-adic entropy} \label{subsec:32}
The von~Neumann algebra $\Nh\Gamma$ has many idempotents given by orthogonal projections to (right) $\Gamma$-invariant closed subspaces of $L^2 (\Gamma)$. In the case of $\Nh\Z^d = L^{\infty} (T^d)$ for example they are the characteristic functions of measurable subsets of $T^d$. Evaluating the trace $\tau_{\Gamma}$ on idempotents defines the real valued non-negative ``continuous dimension'' of the corresponding subspace. It would be interesting to have a $p$-adic analogue of this theory of von~Neumann where the dimensions would be $p$-adic numbers. However in the $p$-adic setting no analogue of $\Nh\Gamma$ is known. The algebras $c_0 (\Gamma)$ are too small. For $\Gamma = \Z^d$ for example $c_0 (\Z^d) = \Q_p \langle z^{\pm 1}_1 , \ldots , z^{\pm 1}_d \rangle$ has no non-trivial idempotents just like $L^1 (\Z^d)$. On the other hand $c_0 (\Z^d)$ is even an integral domain, unlike $L^1 (\Z^d)$ which has very many zero divisors. Nevertheless we have seen in the previous section that for our purposes $c_0 (\Gamma)$ is a reasonable $p$-adic substitute of $L^1 (\Gamma)$. As we will see it is possible to define a $p$-adic analogue of $\log \ddet_{\Nh\Gamma}$ (not of $\ddet_{\Nh\Gamma}$!) on $c_0 (\Gamma)^{\times}$. For $\Gamma = \Z^d$ this is easy in terms of the $p$-adic Snirelman integral. For non-commutative $\Gamma$ some deep facts from algebraic $K$-theory are needed. They replace the functional calculus which is used in the definition of $\ddet_{\Nh\Gamma}$.

We proceed as follows. For $n \ge 1$ consider the $\Z_p$-Banach algebra $A_n = M_n (c_0 (\Gamma , \Z_p))$ with norm $\| (a_{ij}) \| = \max_{i,j} \| a_{ij} \|$. We have a trace map
\[
 \tau_{\Gamma} : c_0 (\Gamma , \Z_p) \longrightarrow \Z_p \quad \text{defined by} \; \tau_{\Gamma} (f) = a_e
\; \text{if} \; f = {\textstyle \sum} a_{\gamma} \gamma \; . 
\]
The composition
\[
 \tau_{\Gamma} : A_n \xrightarrow{\tr} c_0 (\Gamma , \Z_p) \xrightarrow{\tau_{\Gamma}} \Z_p
\]
vanishes on commutators $[u,v] = uv - vu$ in $A_n$. Using the $p$-adic Baker--Campbell--Hausdorff formula and an additional argument for $p = 2$ one shows that on the $1$-units $U^1_n = 1 + pA_n$ of $A^{\times}_n$ the map
\begin{equation} \label{eq:17}
 \log_p \ddet_{\Gamma} := \tau_{\Gamma} \verk \log : U^1_n \longrightarrow \Z_p
\end{equation}
is a homomorphism, c.f. \cite{D3} Theorem 4.1. Here $\log : U^1_n \to A_n$ is defined by the $p$-adically convergent power series
\[
 \log u = - \sum^{\infty}_{\nu=1} \frac{(1-u)^{\nu}}{\nu} \; .
\]
The maps $\log_p \ddet_{\Gamma}$ organize into a homomorphism
\[
 \log_p \ddet_{\Gamma} : 1 + p M_{\infty} (c_0 (\Gamma , \Z_p)) \longrightarrow \Z_p \; .
\]
The exact sequence
\[
 1 \longrightarrow 1+p M_{\infty} (c_0 (\Gamma , \Z_p)) \longrightarrow \GL_{\infty} (c_0 (\Gamma , \Z_p)) \longrightarrow \GL_{\infty} (\F_p [\Gamma]) \longrightarrow 1
\]
gives rise to the exact sequence:
\[
 1 \longrightarrow \Kh \longrightarrow K_1 (c_0 (\Gamma , \Z_p)) \longrightarrow K_1 (\F_p [\Gamma]) \longrightarrow 1 \; .
\]
Here $\Kh$ is the quotient of $U^1_{\infty} = 1 + p M_{\infty} (c_0 (\Gamma , \Z_p))$ by $U^1_{\infty} \cap E_{\infty} (c_0 (\Gamma , \Z_p))$ where $E_{\infty} (R)$ denotes the elementary matrices in $\GL_{\infty} (R)$. If $\Gamma$ is residually finite it can be shown that $\log_p \ddet_{\Gamma}$ factors over $\Kh$, c.f. \cite{D3} Theorem 5.1. This should be true in general. Next, we extend $\log_p \ddet_{\Gamma} : \Kh \to \Z_p$ by the unique divisibility of $\Q_p$ to a homomorphism on $K_1 (c_0 (\Gamma , \Z_p))$:

Let $\langle \Gamma \rangle$ be the image of $\Gamma$ under the canonical map $\F_p [\Gamma]^{\times} \to K_1 (\F_p [\Gamma])$ and define the Whitehead group of $\Gamma$ over $\F_p$ as the quotient:
\[
 Wh^{\F_p} (\Gamma) = K_1 (\F_p [\Gamma]) / \langle \Gamma \rangle \; .
\]
The following result is now clear:

\begin{theorem} \label{t18}
 Let $\Gamma$ be a residually finite group such that $Wh^{\F_p} (\Gamma)$ is torsion. Then there is a unique homomorphism
\[
 \log_p \ddet_{\Gamma} : K_1 (c_0 (\Gamma , \Z_p)) \longrightarrow \Q_p
\]
with the following properties:\\
{\bf a} For every $n \ge 1$ the composition
\[
 U^1_n \longrightarrow A^{\times}_n \longrightarrow K_1 (c_0 (\Gamma , \Z_p)) \xrightarrow{\log_p \ddet_{\Gamma}} \Q_p
\]
is the map \eqref{eq:17}.\\
{\bf b} On the image of $\Gamma$ in $K_1 (c_0 (\Gamma, \Z_p))$ the map $\log_p \ddet_{\Gamma}$ vanishes.
\end{theorem}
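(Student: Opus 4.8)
Throughout write $G = K_1(c_0(\Gamma,\Z_p))$, let $\pi : G \to Q := K_1(\F_p[\Gamma])$ be the reduction homomorphism, whose kernel is $\Kh$, and let $\Lambda = \iota(\Gamma) \subseteq G$ be the image of $\Gamma$. The discussion preceding the theorem already produces a homomorphism $\log_p\ddet_{\Gamma} : \Kh \to \Z_p \subseteq \Q_p$, so everything reduces to extending this map across the short exact sequence $1 \to \Kh \to G \xrightarrow{\pi} Q \to 1$ while also killing $\Lambda$. The plan is to exploit two properties of the target: $\Q_p$ is a divisible, torsion-free abelian group, hence an injective $\Z$-module, and by hypothesis $Wh^{\F_p}(\Gamma) = Q/\langle\Gamma\rangle$ is torsion. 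Concretely I would first build the map on the subgroup $\Kh + \Lambda$ and then extend it to $G$, reading existence off injectivity and uniqueness off the torsion hypothesis.

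First I would define $g : \Kh + \Lambda \to \Q_p$ by $g|_{\Kh} = \log_p\ddet_{\Gamma}$ and $g|_{\Lambda} = 0$. Since $\iota$ is a homomorphism out of the group $\Gamma$, the set $\Lambda = \iota(\Gamma)$ is already a subgroup (products and inverses of group elements are again single group elements), so such a $g$ exists as a homomorphism precisely when the two prescriptions agree on the overlap, i.e.\ when $\log_p\ddet_{\Gamma}$ vanishes on $\Kh \cap \Lambda$. \emph{This vanishing is the crux of the argument and the step I expect to be the main obstacle.} An element of $\Kh \cap \Lambda$ has the form $\iota(\delta)$ with $\delta \in \Gamma$ whose class in $Q$ is trivial. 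To see that $\log_p\ddet_{\Gamma}(\iota(\delta)) = 0$ I would use that, for residually finite $\Gamma$, the factorization of $\log_p\ddet_{\Gamma}$ over $\Kh$ (cf.\ \cite{D3} Theorem 5.1) is compatible with passage to the finite quotients $\Gamma^{(n)} = \Gamma/\Gamma_n$, so that $\log_p\ddet_{\Gamma}(\iota(\delta)) = \lim_{n} \log_p\ddet_{\Gamma^{(n)}}(\bar\delta)$. For the finite group $\Gamma^{(n)}$ the element $\bar\delta$ acts on $\Z_p[\Gamma^{(n)}]$ by the regular representation $\ell(\bar\delta)$, which is a permutation matrix; its determinant is the sign $\pm 1$ of the permutation, a root of unity, and is therefore killed by the branch $\log_p$ normalized by $\log_p p = 0$. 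Hence every term of the limit vanishes and $g$ is well defined.

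Finally I would extend $g$ from $\Kh + \Lambda$ to all of $G$. Since $\pi$ is surjective with kernel $\Kh$ and $\pi(\Lambda) = \langle\Gamma\rangle$, the quotient $G/(\Kh + \Lambda)$ is canonically $Q/\langle\Gamma\rangle = Wh^{\F_p}(\Gamma)$, giving a short exact sequence
\[
 0 \longrightarrow \Kh + \Lambda \longrightarrow G \longrightarrow Wh^{\F_p}(\Gamma) \longrightarrow 0 \; .
\]
Applying $\Hom(-,\Q_p)$ and using that $\Q_p$ is injective, so $\Ext^1(Wh^{\F_p}(\Gamma),\Q_p) = 0$, I obtain an extension $\tilde f : G \to \Q_p$ of $g$; this gives existence, and I set $\log_p\ddet_{\Gamma} := \tilde f$. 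For uniqueness, any two such extensions differ by a homomorphism $G \to \Q_p$ vanishing on $\Kh + \Lambda$, hence factoring through the torsion group $Wh^{\F_p}(\Gamma)$; since $\Q_p$ is torsion-free, $\Hom(Wh^{\F_p}(\Gamma),\Q_p) = 0$ and the difference is zero. Property \textbf{a} is then immediate, because the image of each $U^1_n$ in $G$ lies in $\Kh$, where $\tilde f$ agrees with the factored map whose composition with $U^1_n \to \Kh$ is \eqref{eq:17}; and property \textbf{b} holds by construction, since $\tilde f|_{\Lambda} = g|_{\Lambda} = 0$.
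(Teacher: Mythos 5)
Your proposal is correct and takes essentially the same route as the paper, whose own justification (``The following result is now clear'') consists of exactly the ingredients you assemble: the exact sequence $1 \to \Kh \to K_1(c_0(\Gamma,\Z_p)) \to K_1(\F_p[\Gamma]) \to 1$, the factorization of $\log_p \ddet_{\Gamma}$ over $\Kh$ from \cite{D3} Theorem 5.1, extension across the torsion quotient $Wh^{\F_p}(\Gamma)$ using divisibility of $\Q_p$, and uniqueness from torsion-freeness of $\Q_p$. The well-definedness point you rightly single out as the crux --- vanishing of $\log_p \ddet_{\Gamma}$ on $\Kh \cap \iota(\Gamma)$ --- is precisely what the paper's ``now clear'' delegates to \cite{D3}, and your argument for it (compatibility of the determinant with the finite quotients $\Gamma^{(n)}$, where $\ell(\bar{\delta})$ is a permutation matrix whose determinant $\pm 1$ is killed by $\log_p$) is the one used there.
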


Cases where $Wh^{\F_p} (\Gamma)$ is known to be torsion include torsion-free elementary amenable groups $\Gamma$, c.f. \cite{FL} and word hyperbolic groups \cite{BLR}.

We define the desired logarithmic determinant on $c_0 (\Gamma , \Z_p)^{\times}$ as the composition
\begin{equation} \label{eq:18}
 \log_p \ddet_{\Gamma} : c_0 (\Gamma , \Z_p)^{\times} \longrightarrow K_1 (c_0 (\Gamma , \Z_p)) \xrightarrow{\log_p \ddet_{\Gamma}} \Q_p \; .
\end{equation}
If $\F_p [\Gamma]$ has no zero-divisors it is easy to see that
\[
 c_0 (\Gamma)^{\times} = p^{\Z} c_o (\Gamma , \Z_p)^{\times} \quad \text{and} \quad p^{\Z} \cap c_0 (\Gamma , \Z_p)^{\times} = 1 \; .
\]
In this case, setting $\log_p \ddet_{\Gamma} (p) = 0$ we obtain a homomorphism extending \eqref{eq:18}
\[
 \log_p \ddet_{\Gamma} : c_0 (\Gamma)^{\times} \longrightarrow \Q_p \; .
\]
If $\Gamma$ is torsion-free and elementary amenable then $\F_p [\Gamma]$ has no zero-divisors \cite{KLM} Theorem 1.4. A $p$-adic analogue of the previous calculation of the classical entropy is given by the following result, \cite{D3}, propositions 3.1 and 5.5.

\begin{theorem} \label{t19}
 Assume that the residually finite group $\Gamma$ is elementary amenable and torsion free. Let $f$ be an element of $\Z\Gamma$ such that $X_f$ is $p$-adically expansive i.e. $X_f$ has bounded $p$-torsion or equivalently $f \in c_0 (\Gamma)^{\times}$. Then the $p$-adic periodic entropy $h^{\per}_p (f)$ of the $\Gamma$-action on $X_f$ exists and we have
\[
 h^{\per}_p (f) = \log_p \ddet_{\Gamma} f \; .
\]
\end{theorem}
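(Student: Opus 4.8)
The plan is to pass to the finite quotients $\Gamma^{(n)} = \Gamma/\Gamma_n$, to show that the normalized logarithmic count of $\Gamma_n$-periodic points is exactly the finite-level Fuglede--Kadison determinant of the fibre integral $f^{(n)}$, and then to prove that these finite-level determinants converge to $\log_p \ddet_{\Gamma} f$, an assertion parallel to Theorem \ref{t1}. First I would identify the periodic point count algebraically. By Theorem \ref{t14}, $\Fix_{\Gamma_n} (X_f)$ is finite and its Pontrjagin dual is $\Z [\oGamma] / \Z \oGamma \of$ with $\oGamma = \Gamma^{(n)}$ and $\of = f^{(n)}$. Hence $|\Fix_{\Gamma_n} (X_f)|$ is the order of the cokernel of right multiplication by $f^{(n)}$ on the free $\Z$-module $\Z [\Gamma^{(n)}]$; since $\pi_{n,*}$ is an algebra homomorphism $c_0 (\Gamma) \to c_0 (\Gamma^{(n)}) = \Q_p [\Gamma^{(n)}]$ and $f$ is a unit, $f^{(n)}$ is a unit in $\Q_p [\Gamma^{(n)}]$, so this cokernel has order $|\ddet \, \ell (f^{(n)})|$, the absolute value of the group determinant. (Left and right regular determinants coincide, both being $\prod_{\rho} \ddet \rho (f^{(n)})^{\dim \rho}$ over the irreducible representations.) Thus $(\Gamma : \Gamma_n)^{-1} \log_p |\Fix_{\Gamma_n} (X_f)| = |\Gamma^{(n)}|^{-1} \log_p \ddet \, \ell (f^{(n)})$, the sign and the power of $p$ being invisible to $\log_p$.

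Next I would treat the finite-level determinant. Since $f$ has integral coefficients, lies in $c_0 (\Gamma)^{\times}$, and $\F_p [\Gamma]$ is a domain for torsion-free elementary amenable $\Gamma$ (\cite{KLM}), $f$ is a unit in $c_0 (\Gamma , \Z_p)$ and $\of \in \F_p [\Gamma]^{\times}$; applying the ring homomorphism $\F_p [\Gamma] \to \F_p [\Gamma^{(n)}]$ shows $\overline{f^{(n)}} \in \F_p [\Gamma^{(n)}]^{\times}$, so $f^{(n)} \in \Z_p [\Gamma^{(n)}]^{\times}$. For a finite group $G$ the ring $\F_p [G]$ is finite, so $K_1 (\F_p [G])$ is finite and $Wh^{\F_p} (G)$ is torsion; thus Theorem \ref{t18} defines $\log_p \ddet_G$. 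I would check that for every unit $F \in \Z_p [G]^{\times}$ one has $\log_p \ddet_G F = |G|^{-1} \log_p \ddet \, \ell (F)$: on $1$-units this is $\log \ddet \, \ell(F) = \Tr_{\mathrm{reg}} \log F = |G| \, \tau_G (\log F)$, and in general both sides are homomorphisms $\Z_p [G]^{\times} \to \Q_p$ that agree on $1$-units, vanish on $G$, and kill torsion, hence coincide by the uniqueness in Theorem \ref{t18}. Combining with the first step gives $(\Gamma : \Gamma_n)^{-1} \log_p |\Fix_{\Gamma_n} (X_f)| = \log_p \ddet_{\Gamma^{(n)}} (f^{(n)})$.

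It then remains to prove the limit formula $\log_p \ddet_{\Gamma} f = \lim_{n} \log_p \ddet_{\Gamma^{(n)}} (f^{(n)})$. On a $1$-unit $u = 1 + px$ this is direct: $\log_p \ddet_{\Gamma^{(n)}} (\pi_{n,*} u) = \tau_{\Gamma^{(n)}} (\pi_{n,*} \log u) = \sum_{\gamma \in \Gamma_n} (\log u)_{\gamma}$, and since $\log u \in c_0 (\Gamma , \Z_p)$ has coefficients tending $p$-adically to $0$ while $\Gamma_n \to e$, the ultrametric inequality forces convergence to $(\log u)_e = \tau_{\Gamma} (\log u) = \log_p \ddet_{\Gamma} u$. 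To reduce a general unit $f$ to this case I would exploit that $Wh^{\F_p} (\Gamma)$ is torsion: choose $N \ge 1$ and $\gamma_0 \in \Gamma$ with $[\of]^N = [\overline{\gamma_0}]$ in $K_1 (\F_p [\Gamma])$, so that $f^N \gamma_0^{-1}$ reduces to a trivial class, lies in $\Kh$, and is represented by a $1$-unit $u$. As $\log_p \ddet_{\Gamma}$ and each $\log_p \ddet_{\Gamma^{(n)}}$ vanish on group elements and are compatible with $\pi_{n,*}$ on $K_1$, both $\log_p \ddet_{\Gamma} f$ and $\log_p \ddet_{\Gamma^{(n)}} (f^{(n)})$ equal $N^{-1}$ times their values on $u$, and the $1$-unit convergence finishes the proof, simultaneously establishing existence of the limit.

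The hard part is this last step: showing that $\log_p \ddet_{\Gamma}$, defined $K$-theoretically on $K_1 (c_0 (\Gamma , \Z_p))$, is genuinely computed by the finite levels. The delicate points are that the reduction $[\of] \in K_1 (\F_p [\Gamma])$ must be stripped off uniformly in $n$ — which is exactly what the torsion of $Wh^{\F_p} (\Gamma)$ (available for torsion-free elementary amenable $\Gamma$, \cite{FL}) together with the vanishing on $\langle \Gamma \rangle$ provides — and that the same data $u$, $N$, $\gamma_0$ serve for every $n$, so that functoriality of $K_1$ under $\pi_{n,*}$ renders the finite-level computations compatible with the infinite-level one. Residual finiteness, through the injection $c_0 (\Gamma) \hookrightarrow \prod_n c_0 (\Gamma^{(n)})$, is what guarantees that $\log_p \ddet_{\Gamma}$ factors over $\Kh$ and makes this comparison meaningful.
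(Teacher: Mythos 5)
Your proposal is correct in substance and follows essentially the same route as the paper's proof, namely the argument of \cite{D3}, Propositions 3.1 and 5.5, which the paper cites: identify $|\Fix_{\Gamma_n}(X_f)|$ with the absolute value of the finite-level group determinant of $f^{(n)}$, identify $|\Gamma^{(n)}|^{-1}\log_p \ddet\,\ell(f^{(n)})$ with $\log_p\ddet_{\Gamma^{(n)}}(f^{(n)})$, and prove the limit formula by using the torsion of $Wh^{\F_p}(\Gamma)$ (together with vanishing on $\langle\Gamma\rangle$ and functoriality of $K_1$ under $\pi_{n,*}$) to reduce to $1$-units, where convergence is the elementary fact that $\tau_{\Gamma^{(n)}}(\pi_{n,*}x)=\sum_{\gamma\in\Gamma_n}x_\gamma\to x_e$ for $x\in c_0(\Gamma,\Z_p)$ and $\Gamma_n\to e$.

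There is one small slip worth fixing. From $f\in\Z\Gamma\cap c_0(\Gamma)^{\times}$ you conclude $f\in c_0(\Gamma,\Z_p)^{\times}$, but this fails exactly when $p$ divides $f$ in $\Z\Gamma$ (e.g.\ $f=p$): then $\of=0$ in $\F_p[\Gamma]$, the class $[\of]\in K_1(\F_p[\Gamma])$ does not exist, and $f^{(n)}\notin\Z_p[\Gamma^{(n)}]^{\times}$, so your Steps 2 and 3 do not literally apply. The repair is one line: write $f=p^k g$ with $g\in\Z\Gamma$ not divisible by $p$; then $g\in c_0(\Gamma,\Z_p)^{\times}$, and since $\log_p p=0$ one has both $(\Gamma:\Gamma_n)^{-1}\log_p|\Fix_{\Gamma_n}(X_f)|=(\Gamma:\Gamma_n)^{-1}\log_p|\Fix_{\Gamma_n}(X_g)|$ (the determinants differ by $p^{k|\Gamma^{(n)}|}$) and $\log_p\ddet_{\Gamma}f=\log_p\ddet_{\Gamma}g$ (by the normalization $\log_p\ddet_{\Gamma}(p)=0$ in the extension of the determinant to $c_0(\Gamma)^{\times}$), so the theorem for $f$ follows from your argument applied to $g$.
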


We now make this explicit in the case $\Gamma = \Z^d$ where the outcome will be a $p$-adic Mahler measure. Recall that the Snirelman integral of a continuous function $\varphi : T^d_p \to \C_p$ is defined by the following limit if it exists, \cite{BD} \S\,1
\[
 \int_{T^d_p} \varphi = \lim_{n\to \infty \atop (n,p) = 1} n^{-d} \sum_{\zeta \in \mu^d_N} \varphi (\zeta) \; .
\]
Here $\mu_N$ is the group of $N$-th roots of unity in $\oQ^{\times}_p$. It can be shown that for a function $\hf \in \C_p \langle z^{\pm 1}_1 , \ldots , z^{\pm 1}_d \rangle$ which does not vanish in any point of $T^d_p$ the integral
\[
 m_p (\hf) := \int_{T^d_p} \log_p \hf
\]
exists. It is called the $p$-adic (logarithmic) Mahler measure. Here $\log_p : \C^{\times}_p \to \C_p$ is the $p$-adic logarithm normalized by $\log_p (p) = 0$. For a Laurent polynomial $\hf (z) = a_m z^m + \ldots + a_r z^r$ in $\C_p [z , z^{-1}]$ with $a_m , a_r$ non-zero, we have by \cite{BD} Proposition 1.5
\begin{align}
 m_p (\hf) & = \log_p a_r - \sum_{0 < |\alpha|_p < 1} \log_p \alpha \label{eq:19} \\
& = \log_p a_m + \sum_{|\alpha|_p > 1} \log_p \alpha \nonumber \; . 
\end{align}
Here $\alpha$ runs over the zeroes of $\hf$ with their multiplicities. This formula corresponds to formula \eqref{eq:8} for the ordinary Mahler measure. Setting $\tau (\varphi) = \int_{T^d_p} \varphi$ there is a commutative diagram
\[
 \xymatrix{
c_0 (\Z^d) \ar[r]^{\tau_{\Gamma}} \ar[d]^{\wr}_{\wedge} & \Q_p \ar@{=}[d] \\
\Q_p \langle z^{\pm 1}_1 , \ldots , z^{\pm 1}_d \rangle \ar[r]^-{\tau} & \Q_p \; .
}
\]
Using this it is shown in \cite{D3} Remark 5.2 that for $f \in c_0 (\Z^d)$ we have:
\begin{equation} \label{eq:20}
 \log_p \ddet_{\Z^d} f = m_p (\hf) \; .
\end{equation}
This corresponds to formulas \eqref{eq:6} and \eqref{eq:7}.

We finish this section with some ideas of Br\"auer for general $p$-adically expansive actions in the case $\Gamma = \Z^d$, c.f. \cite{Br} 4.3. Let $\Mh_{S_p} (R_d)$ be the category of finitely generated $R_d$-modules which are $S_p = R_d \cap c_0 (\Z^d)^{\times}$-torsion. As in section \ref{subsec:23} the exact localization sequence
\[
 R^{\times}_d = K_1 (R_d) \longrightarrow K_1 (R_d [S^{-1}_p]) \xrightarrow{\delta} K_0 (\Mh_{S_p} (R_d)) \xrightarrow{\varepsilon} K_0 (R_d)
\]
leads to an isomorphism:
\[
 \overline{\delta} : K_1 (R_d [S^{-1}_p]) / R^{\times}_d \silo K_0 (\Mh_{S_p} (R_d)) \; .
\]
Its inverse is denoted by $cl_p$. For example
\[
 cl_p (R_d / R_d f) = [f] \mod R^{\times}_d \; .
\]
The map $\log_p \ddet_{\Z^d}$ on $K_1 (c_0 (\Z^d , \Z_p))$ has a natural extension to $K_1 (c_0 (\Z^d))$ because it is trivial on $SK_1 (c_0 (\Z^d , \Z_p))$. Consider the composition
\[
 \log_p \ddet_{\Z^d} : K_1 (R_d [S^{-1}_p]) \longrightarrow K_1 (c_0 (\Z^d)) \xrightarrow{\log_p \ddet_{\Z^d}} \Q_p \; .
\]
It is trivial on $R^{\times}_d = \pm \Z^d$ and hence the $p$-adic number $\log_p \ddet_{\Z^d} (cl (X))$ is defined for every $p$-adically expansive $\Z^d$-action $X$. Comparing with theorem \ref{t6a} we see that it should equal the ``$p$-adic entropy'' of $X$. For $X = X_f$ this is the case as we have seen before. In general however, $h^{\per}_p (X)$ as we have defined it may not exist as Br\"auer observes in \cite{Br} Example 7.1. It is an open problem even for $\Gamma = \Z^d$ to define dynamically a better notion of $p$-adic entropy which applies to all $p$-adically expansive $\Gamma$-actions $X$ and equals $\log_p \ddet_{\Z^d} (cl (X))$.

\subsection{The relation to the syntomic regulator} \label{subset:33}
We take up the notations from section \ref{subsec:24}. Thus for an element $f \in \Z [\Z^d] = \Gamma (\G^d_m , \Oh)$ let $U = \G^d_m \setminus V (f)$ be the complement of the zero scheme $V (f)$ of $f$ in $\G^d_m$. Consider the regulator map into syntomic cohomology \cite{Be}, \cite{BD} 2.3
\[
 r_{\syn} : H^{d+1}_{\Mh} (U , \Q (d+1)) \longrightarrow H^{d+1}_{\syn} (U_{\Z_p} , d+1) \; .
\]
There is a canonical isomorphism \cite{BD} (7)
\[
 H^{d+1}_{\syn} (U_{\Z_p} , d+1) \silo H^d_{\rig} (U_{\F_p} / \Q_p)
\]
with Berthelot's rigid cohomology \cite{Ber}. The cohomology classes in $H^d_{\rig} (U_{\F_p} / \Q_p)$ are represented by overconvergent $d$-forms $\omega$. Writing
\[
 \omega = f \frac{d z_1}{z_1} \wedge \ldots \wedge \frac{d z_d}{z_d}
\]
the function $f$ restricts to a rigid analytic function $T^d_p$ and we set
\[
 \langle [\omega] , [T^d_p] \rangle := \int_{T^d_p} f \; .
\]
This gives a well defined $\Q_p$-valued linear form $[T^d_p]$ on $H^d_{\rig} (U_{\F_p} / \Q_p)$ which we may view as a rigid homology class in $H^{\rig}_d (U_{\F_p} / \Q_p)$.

Defining the symbol $\{ f , e_1 , \ldots , e_d \}$ in $H^{d+1}_{\Mh} (U , \Q (d+1))$ as in section \ref{subsec:24} the following result holds, \cite{BD} Fact 2.4.

\begin{prop} \label{t20}
 For $f \in \Z [\Z^d]$ with $\hf (z) \neq 0$ for all $z \in T^d_p$ we have
\[
 \langle r_{\syn} \{ f , e_1 , \ldots , e_d \} , [T^d_p] \rangle = m_p (\hf) \; .
\]
\end{prop}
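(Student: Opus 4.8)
The plan is to compute the syntomic regulator of the symbol $\{ f , e_1 , \ldots , e_d \}$ explicitly as a class in $H^d_{\rig} (U_{\F_p} / \Q_p)$ and then to pair it with $[T^d_p]$ via the Snirelman integral, running exactly parallel to the archimedean calculation behind Proposition \ref{t7}. Since each $e_i = z_i$ is a coordinate on $\G^d_m$, its class in $H^1_{\Mh} (U , \Q (1)) = \Oh^{\times} (U) \otimes \Q$ has the everywhere defined logarithmic differential $dz_i / z_i$, and these $d$ forms already span the top rigid cohomology of the torus. The expectation is therefore that under $r_{\syn}$ the cup product $f \cup e_1 \cup \cdots \cup e_d$ collapses to the single overconvergent $d$-form
\[
 \omega_f = \log_p \hf \cdot \frac{dz_1}{z_1} \wedge \cdots \wedge \frac{dz_d}{z_d} \; ,
\]
with $\log_p$ the branch of the $p$-adic logarithm normalized by $\log_p p = 0$. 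The hypothesis $\hf (z) \neq 0$ on $T^d_p$ guarantees that $\hf$ is a unit in a strict neighbourhood of $T^d_p$, so that $\log_p \hf$ is rigid analytic there and $\omega_f$ is genuinely overconvergent.

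First I would recall the explicit description of the syntomic regulator on symbols of units, which in Besser's theory \cite{Be}, \cite{BD} is expressed through $p$-adic (Coleman) integration: the logarithm $\log_p$ normalized by $\log_p p = 0$ is precisely the Coleman primitive of $d\log$, and it is the elementary building block of the regulator. For a general $(d+1)$-fold symbol $\{ g_0 , \ldots , g_d \}$ the class $r_{\syn}$ is represented by a rigid $d$-form assembled from the $d\log g_i$ and their Coleman primitives $\log_p g_i$. Specializing to $g_0 = f$ and $g_i = e_i = z_i$, the differentials $d\log e_i = dz_i / z_i$ are the globally defined invariant forms spanning $H^d_{\rig}$ of the torus, and the only Coleman primitive that survives in the final $d$-form is $\log_p \hf$. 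This identifies $r_{\syn} \{ f , e_1 , \ldots , e_d \}$ with the class $[\omega_f]$ in $H^d_{\rig} (U_{\F_p} / \Q_p)$, the contributions involving the primitives $\log_p z_i$ being trivial in cohomology on a strict neighbourhood of $T^d_p$ (where $|z_i|_p = 1$).

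Finally I would invoke the definition of the pairing with $[T^d_p]$ given in the text: writing $\omega_f$ as the coefficient function $\log_p \hf$ times the invariant top form, one obtains
\[
 \langle r_{\syn} \{ f , e_1 , \ldots , e_d \} , [T^d_p] \rangle = \int_{T^d_p} \log_p \hf = m_p (\hf) \; ,
\]
the last equality being the definition of the $p$-adic Mahler measure, whose Snirelman integral exists precisely because $\hf$ does not vanish on $T^d_p$. I expect the one genuinely delicate step to be the second: justifying that the syntomic cup product of the $d+1$ symbols reduces to $[\omega_f]$ with the \emph{correct} branch of $\log_p$. This is where the Frobenius structure built into syntomic cohomology does the real work, since the normalization $\log_p p = 0$ is forced by the Frobenius behaviour of the $dz_i / z_i$, and where one must check that the auxiliary Coleman primitives $\log_p z_i$ contribute nothing to the class near $T^d_p$. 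Once this identification is in hand the evaluation against $[T^d_p]$ is immediate from the definitions, just as in the archimedean case the pairing with $(2 \pi i)^{-d} [T^d]$ reduced $r_{\Dh} \{ f , e_1 , \ldots , e_d \}$ to the ordinary Mahler measure $m (\hf)$.
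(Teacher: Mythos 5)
The paper itself offers no proof of Proposition \ref{t20}; it is quoted from \cite{BD} Fact 2.4, so your attempt has to be measured against the argument there. Your proposal breaks down at its central step: the claim that nonvanishing of $\hf$ on $T^d_p$ makes $\log_p \hf$ rigid analytic on a strict neighbourhood of $T^d_p$, so that $\omega_f = \log_p \hf \cdot \frac{dz_1}{z_1}\wedge\cdots\wedge\frac{dz_d}{z_d}$ is an overconvergent form representing the class. This is false. A unit of $\Q_p\langle z_1^{\pm 1},\ldots,z_d^{\pm 1}\rangle$ factors as $c\,z^{\mu}u$ with $c \in \Q_p^{\times}$, $\mu \in \Z^d$ a monomial exponent and $u$ a $1$-unit, and $\log_p$ of it is rigid analytic near $T^d_p$ if and only if $\mu = 0$: the function $\log_p z_i$ has no rigid analytic incarnation, since its would-be differential $dz_i/z_i$ represents a nonzero class in $H^1_{\rig}$ of the annulus $|z_i|_p = 1$ and hence is not exact there. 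Concretely, $\hf(z) = z_1 - p$ satisfies the hypothesis of the proposition (for $|z_1|_p = 1$ one has $|z_1 - p|_p = 1$), yet $\log_p \hf = \log_p z_1 + \log(1 - p/z_1)$ is not rigid analytic on any strict neighbourhood of $T^d_p$; it is only a Coleman function, which is a strictly weaker notion. Since the pairing $\langle\,\cdot\,,[T^d_p]\rangle$ in the statement is defined \emph{only} on classes represented by overconvergent forms, your final evaluation has nothing to act on; the existence of $m_p(\hf) = \int_{T^d_p}\log_p\hf$ as a Snirelman integral of a continuous function does not make $\omega_f$ a legitimate representative.

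The proof in \cite{BD} uses the Frobenius structure to produce an honest overconvergent representative, and this is exactly the ingredient your sketch is missing. Take the standard Frobenius lift $\phi(z_i) = z_i^p$; for $f \in \Z[\Z^d]$ one has $f(z)^p \equiv f(z^p) \bmod p$, so $\frac{1}{p}\log\bigl(f(z)^p/f(z^p)\bigr)$ \emph{is} overconvergent, and it is this Frobenius-regularized logarithm, not $\log_p\hf$, that enters the cocycle for the syntomic class; the terms coming from the $e_i$ vanish identically because $z_i^p/\phi(z_i) = 1$, which is the correct form of your assertion that the $\log_p z_i$ contribute nothing. One then pairs with $[T^d_p]$ using the invariance of the Snirelman integral under the substitution $z \mapsto z^p$ (which permutes $\mu_n^d$ for $(n,p)=1$), getting $\int_{T^d_p}\frac{1}{p}\log\bigl(f(z)^p/f(z^p)\bigr) = \bigl(1-\frac{1}{p}\bigr)m_p(\hf)$, and the factor $1-\frac{1}{p}$ is cancelled by the Euler-type factor coming from the Frobenius normalization in the passage between $H^{d+1}_{\syn}$ and $H^d_{\rig}$, since $\phi$ pairs with $[T^d_p]$ through the eigenvalue $p^d$. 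This cancellation is the real content of the proposition, and it is invisible in your write-up: your non-admissible representative produces the correct-looking answer only because the two Frobenius factors that you never see happen to cancel each other.
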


We can express this differently in terms of entropy and determinants as follows:

\begin{cor} \label{t21}
 For $f \in \Z [\Z^d]$ with $f \in c_0 (\Z^d)^{\times}$ or equivalently with the $\Z^d$-action on $X_f$ being $p$-adically expansive, we have
\[
 \langle r_{\syn} \{ f , e_1 , \ldots , e_d \} , [T^d_p] \rangle = h^{\per}_p (f) = \log_p \ddet_{\Z^d} (f) \; .
\]
\end{cor}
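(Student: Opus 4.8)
The plan is to derive the corollary by chaining together three results already established in the excerpt, so that essentially no new computation is needed; the work will lie entirely in checking that the hypotheses line up. First I would record that the two formulations of the assumption genuinely coincide: for $\Gamma = \Z^d$, which is residually finite, Theorem \ref{t14} identifies $p$-adic expansiveness of $X_f$ (bounded $p$-torsion) with $f$ being a unit in $c_0 (\Z^d)$, and Proposition \ref{t12} identifies the latter with $\hf (z) \neq 0$ for all $z \in T^d_p$. In particular the hypothesis guarantees that $\hf$ has no zeroes on $T^d_p$, so that the $p$-adic Mahler measure $m_p (\hf) = \int_{T^d_p} \log_p \hf$ exists as a Snirelman integral.

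With the nonvanishing of $\hf$ on $T^d_p$ in hand, Proposition \ref{t20} will apply verbatim and yield the first equality
\[
 \langle r_{\syn} \{ f , e_1 , \ldots , e_d \} , [T^d_p] \rangle = m_p (\hf) \; .
\]
Next I would invoke formula \eqref{eq:20}, which identifies the $p$-adic Mahler measure with the logarithmic $p$-adic Fuglede--Kadison determinant, $m_p (\hf) = \log_p \ddet_{\Z^d} (f)$, giving the rightmost equality of the corollary. Finally, since $\Z^d$ is residually finite, elementary amenable and torsion-free, Theorem \ref{t19} applies and provides $h^{\per}_p (f) = \log_p \ddet_{\Z^d} (f)$. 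Concatenating these three identities then produces the asserted chain
\[
 \langle r_{\syn} \{ f , e_1 , \ldots , e_d \} , [T^d_p] \rangle = m_p (\hf) = \log_p \ddet_{\Z^d} (f) = h^{\per}_p (f) \; .
\]

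Because each ingredient is already proved, I do not expect a single hard step; the only point requiring genuine care is ensuring that the symbol $m_p (\hf)$ appearing in Proposition \ref{t20} and in formula \eqref{eq:20} carries the same normalization — both being the Snirelman integral of the branch $\log_p$ with $\log_p p = 0$ — so that the two independent computations of the regulator pairing and of the determinant are being measured against a common quantity. I would also double-check that the existence of $h^{\per}_p (f)$, which is part of the conclusion of Theorem \ref{t19} rather than a standing assumption, is being used correctly, and that the group-theoretic hypotheses of that theorem (amenability and torsion-freeness) are indeed met by $\Z^d$ before citing it.
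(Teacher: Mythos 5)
Your proof is correct and follows essentially the same route the paper takes: the corollary is stated there as an immediate consequence of Proposition \ref{t20}, formula \eqref{eq:20}, and Theorem \ref{t19}, with Proposition \ref{t12} and Theorem \ref{t14} supplying exactly the equivalence of hypotheses you check at the outset. Your cautionary remarks about the common normalization of $m_p(\hf)$ and about $\Z^d$ satisfying the group-theoretic hypotheses of Theorem \ref{t19} are the right things to verify, and they hold.
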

As in the archimedian case it would be interesting if this formula could be generalized to some non-commutative groups $\Gamma$. 

Apart from the $p$-adic Mahler measure which we just discussed, there is another more global one which depends on the choice of embeddings
\[
 \C \overset{\sigma_{\infty}}{\hookleftarrow} \oQ \overset{\sigma_p}{\hookrightarrow} \oQ_p
\]
and takes values in $B_{dR}$. Assume that $V (\hf) \cap T^d = \emptyset$ and a further technical condition on $\hf$ c.f. \cite{BD} Assumption 3.1. Fixing $\sigma = (\sigma_{\infty} , \sigma_p)$ one can use Falting's or Tsuji's comparison theorems in $p$-adic cohomology to transfer the homology class $(2 \pi i)^{-d} [T^d]$ in the Betti homology of $U_{\R}$ to a $B_{dR}$-valued rigid homology class $c_{\sigma}$. We set
\[
 m_{p,\sigma} (\hf) := \langle r_{\syn} \{ f , z_1 , \ldots , z_d \} , c_{\sigma} \rangle \in F^{-d} B_{dR} \subset B_{dR} \; .
\]
The Laurent polynomials $\hf$ in $\Z [z^{\pm 1}]$ to which this applies have the form
\begin{equation} \label{eq:21}
 \hf (z) = z^N a \prod_{\alpha} (z-\alpha) \quad \text{where} \; \alpha \in \oQ
\end{equation}
with $N \in \Z$ and with $|a|_p = 1 = |\sigma_p (\alpha)|_p$ and $|\alpha|_{\infty} := |\sigma_{\infty} (\alpha)| \neq 1$ for all $\alpha$. Writing $\hf$ in the form
\[
 \hf (z) = a_m z^m + \ldots + a_r z^r
\]
with $a_m = a$ and $a_r \neq 0$, we have $|a_r|_p = 1$ and
\begin{align*}
m_{p,\sigma} (\hf) & = \log_p a_r - \sum_{0 < |\alpha|_{\infty} < 1} \log_p (\sigma_p (\alpha)) \\
& = \log_p a_m + \sum_{|\alpha|_{\infty} > 1} \log_p (\sigma_p (\alpha)) \; .
\end{align*}
There are two-variable polynomials $\hf$ for which $m_{p,\sigma} (\hf)$ can be related to values of $p$-adic $L$-series of elliptic curves, c.f. \cite{BD} Theorem 3.11. Thus $m_{p,\sigma} (\hf)$ seems to be an interesting quantity. I do not know if there is a definition of $p$-adic entropy depending on $\sigma$ which for the $\Z^d$-action on $X_f$ gives $m_{p,\sigma} (\hf)$ under suitable conditions. Neither do I know a definition of a $p$-adic logarithmic determinant depending on $\sigma$ whose value on suitable $f$'s gives $m_{p,\sigma} (\hf)$. 


\end{document}